\documentclass{amsart}

\usepackage[utf8]{inputenc}
\usepackage{amsmath, amssymb, amsthm}
\usepackage{mathtools}
\usepackage{geometry}

\usepackage{thmtools}
\usepackage{hyperref}

\usepackage{enumitem}
\usepackage{xcolor}
\usepackage{ytableau}
\usepackage{tikz}

\usepackage{todonotes}

\usepackage{cleveref}

\newcommand{\NN}{\mathbb{N}}
\newcommand{\ZZ}{\mathbb{Z}}

\newcommand{\tm}{\text{-}} 

\DeclareMathOperator{\DES}{DES}
\DeclareMathOperator{\maj}{maj}
\DeclareMathOperator{\NEG}{NEG}

\DeclareMathOperator{\pr}{pr}

\DeclareMathOperator{\cyc}{cyc}
\DeclareMathOperator{\col}{col}
\DeclareMathOperator{\row}{row}

\newcommand{\SYT}{\mathrm{SYT}}
\newcommand{\BST}{\mathrm{BST}}

\newcommand{\Dfn}[1]{\textcolor{blue}{\textbf{\emph{#1}}}} 

\newcommand{\qbinom}[3][q]{\begin{bmatrix} #2 \\ #3 \end{bmatrix}_{#1}}

\newtheorem{theorem}{Theorem}[section]
\newtheorem{lemma}[theorem]{Lemma}
\newtheorem{proposition}[theorem]{Proposition}
\newtheorem{corollary}[theorem]{Corollary}

\theoremstyle{definition}
\newtheorem{definition}[theorem]{Definition}
\newtheorem{example}[theorem]{Example}

\theoremstyle{remark}
\newtheorem{remark}[theorem]{Remark}

\Crefname{example}{Example}{Examples}
\crefname{example}{example}{examples}
\Crefname{proposition}{Proposition}{Propositions}
\crefname{proposition}{proposition}{propositions}

\title{Super Major Index Cyclic Sieving}
\author{Stephan Pfannerer}
\address{\parbox{\linewidth}{Dept.\ of Combinatorics \& Optimization, University of Waterloo, Waterloo, ON, N2L 3G1, Canada}}
\email{\parbox[t]{\linewidth}{math@pfannerer-mittas.net}}
\date{\today}

\begin{document}

\maketitle

\begin{abstract}
Recently, Armon and Swanson introduced signed standard tableaux and a corresponding super major index that refines the classical major index. In this paper, we prove that signed standard tableaux of rectangular shape exhibit a cyclic sieving phenomenon (CSP) under the combined action of Schützenberger promotion and cyclic shift of the signs, with the sieving polynomial given by the super major index generating function. This extends Rhoades's celebrated CSP for standard Young tableaux. Furthermore, by considering Cartesian products of tableaux, we generalize this result to arbitrary non-rectangular shapes.
\end{abstract}

\section{Introduction}

The \Dfn{cyclic sieving phenomenon} (CSP), introduced by Reiner, Stanton, and White \cite{RSW2004}, reveals a surprising connection between the enumeration of symmetry classes and the evaluation of $q$-analogues at roots of unity. This phenomenon generalizes Stembridge's earlier ``$q=-1$ phenomenon'' \cite{Stembridge1994} and has since been found in diverse combinatorial contexts (see \cite{Sagan2011} for a survey).

A celebrated instance of the CSP involves \Dfn{standard Young tableaux} (SYT). For a rectangular partition $\lambda$, Rhoades \cite{Rhoades2010} proved that the set $\SYT(\lambda)$ exhibits a CSP under the action of Schützenberger's \Dfn{promotion}, with the sieving polynomial given by the $q$-analogue of the Frame--Robinson--Thrall hook length formula \cite{FrameRobinsonThrall1954}, which agrees up to a simple twist with the \Dfn{major index} generating series. While promotion is well-behaved on rectangles, its order and structure on arbitrary shapes are more complex. In recent joint work with Alexandersson et al. \cite{APRU2021} we established an ``implicit'' CSP for general shapes with the same sieving polynomial, though the explicit bijective action remains conjectural or completely unknown in many cases.

In this paper, we extend the study of cyclic sieving to 
\Dfn{signed standard tableaux}\footnote{Armon and Swanson call them \Dfn{standard super tableaux}, but that term is ambiguous in literature}, objects recently investigated by Armon and Swanson \cite{ArmonSwanson2025} in the context of Lie superalgebras $\mathfrak {gl}(m|n)$.
A signed standard tableau extends the classical definition by distinguishing a subset of ``negative'' entries. They introduced a \Dfn{super major index} statistic on these tableaux, which refines the classical major index and encodes the interplay between descents and negative entries. Our main result establishes that rectangular signed standard tableaux together with the super major index generating series exhibit the cyclic sieving phenomenon under a super-analogue of promotion.

\begin{theorem}[Main Theorem A] \label{thm:mainA}
    Let $\lambda \vdash n$ be a rectangular partition. For any $0 \le k \le n$, the triple
    \[
        \left(\SYT_{\pm k}(\lambda), \langle \pr \rangle, q^{\gamma(n,k)-\kappa(\lambda)} f^\lambda_{\pm k}(q) \right)
    \]
    exhibits the cyclic sieving phenomenon, where the action is Schützenberger promotion on the tableau and cyclic shift on the negative set. The sieving polynomial $f^\lambda_{\pm k}(q)$ is the generating function for the super major index over signed tableaux with $k$ negative entries, and the twist $\kappa(\lambda)$ is an integer depending on the shape of $\lambda$ and $\gamma(n,k)= (n-1)\binom{k}{2}$.
\end{theorem}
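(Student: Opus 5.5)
The plan is to prove the theorem representation-theoretically, following Rhoades' original argument and adding an exterior-power factor for the signs.

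As a set, $\SYT_{\pm k}(\lambda)$ is $\SYT(\lambda)\times\binom{[n]}{k}$, and the action is diagonal: promotion on the first factor, cyclic shift on the second. On rectangles promotion has order $n$, so both factors carry actions of the same cyclic group $C_n=\langle c\rangle$ with $c=(1\,2\,\cdots\,n)$. A product element is fixed by $\pr^d$ exactly when both components are fixed. Hence
\[
\#\mathrm{Fix}(\pr^d)=\#\mathrm{Fix}_{\SYT(\lambda)}(\pr^d)\cdot\#\mathrm{Fix}_{\binom{[n]}{k}}(c^d).
\]
The sieving side should factor correspondingly as a product of character values.

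\textbf{Step 1 (identify $f^\lambda_{\pm k}$ as a fake degree).} I would show, up to an explicit power of $q$, that $f^\lambda_{\pm k}(q)$ is the graded multiplicity generating function of the $\mathfrak S_n$-module $S^\lambda\otimes\wedge^k\mathbb C^n$ in the coinvariant algebra. This uses $\sum_{\mu}\langle\chi^\mu,\chi^\lambda\otimes\chi^{\wedge^k}\rangle\,f^\mu(q)$ together with the Armon--Swanson description of the super major index. In effect their generating function is a principal specialization of a hook (super) Schur function, which decomposes via the $\mathfrak{gl}(m|n)$ branching rule into pieces $s_\lambda\cdot e_k$-type terms. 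If their paper already states $f^\lambda_{\pm k}$ in this form, I would simply quote it. Otherwise I would derive it from the super major index by a descent-set argument: a signed tableau contributes $\maj$ plus a contribution from the negative set, matching the Pieri-type expansion of $s_\lambda e_k$ evaluated through the stable principal specialization and then the finite $q$-binomial correction. \emph{This step, together with matching the exact power of $q$, is where I expect the main difficulty.}

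\textbf{Step 2 (Springer's theorem).} The long cycle $c$ is a regular element of order $n$ for $\mathfrak S_n$. Springer's theorem therefore gives
\[
\chi^{V}(c^d)=\operatorname{Fdeg}_V(\omega^{d})
\]
for every module $V$, where $\omega=e^{2\pi i/n}$, possibly up to a power of $\omega$ fixed by the grading conventions. Applied to $V=S^\lambda\otimes\wedge^k\mathbb C^n$, the left-hand side factors as $\chi^\lambda(c^d)\cdot\chi^{\wedge^k}(c^d)$.

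\textbf{Step 3 (evaluate the two characters).} For the first factor, Rhoades' theorem via the Kazhdan--Lusztig basis gives $\chi^\lambda(c^d)=\omega^{d\,\kappa'(\lambda)}\,\#\mathrm{Fix}(\pr^d)$. Here $\kappa'$ accounts for the twist; this is a root-of-unity factor coming from the $\pm$ signs of the action of $c$ on the KL basis. For the second factor, $c^d$ permutes the basis $e_S$ of $\wedge^k\mathbb C^n$ up to sign. The trace is the signed count of $c^d$-invariant subsets $S$, and on each such $S$ the sign is the sign of $c^d$ restricted to $S$. For an invariant $S$ this sign depends only on $n,k,d$, and it equals $\omega^{d(n-1)\binom{k}{2}}=\omega^{d\gamma(n,k)}$. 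I would verify this by writing $S$ as a union of $k/o$ orbits, each an $o$-cycle where $o=n/\gcd(n,d)$. Multiplying the two evaluations gives
\[
q^{\gamma(n,k)-\kappa(\lambda)}f^\lambda_{\pm k}(q)\big|_{q=\omega^d}=\#\mathrm{Fix}(\pr^d).
\]
The bookkeeping of exponents (Step 1's shift together with $\kappa'$ and $\gamma$) defines $\kappa(\lambda)$, and I would confirm the normalization by checking the case $d=0$.
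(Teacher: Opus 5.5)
Your argument is correct, but it reaches the key identity by a genuinely different route than the paper. Both proofs reduce the theorem to $f^\lambda_{\pm k}(\omega^d)=f^\lambda(\omega^d)\cdot\omega^{d\gamma(n,k)}\,\#\{S\in\binom{[n]}{k}: c^dS=S\}$, and then combine Rhoades's CSP with the subset CSP.

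The paper proves this identity purely at the level of polynomials. It rewrites the Armon--Swanson product formula as $f^\lambda(q)\prod_{\square}(1+tq^{c(\square)})$. At a primitive $s$-th root of unity $\zeta$, either $f^\lambda(\zeta)=0$ (James--Kerber, when $\BST(\lambda,s)=\emptyset$), or the contents are equidistributed mod $s$. In the latter case the product collapses to $(1-(-t)^s)^d$, whose $t^k$-coefficient is $(-1)^{k+k/s}\binom{d}{k/s}$.

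You instead get the factorization from Springer's theorem and multiplicativity of characters under $\otimes$. Your Koszul sign $(-1)^{k-k/o}$ on the $c^d$-stable vectors $e_S$ is exactly the paper's $(-1)^{k+k/s}$. This explains the twist $\gamma(n,k)$ conceptually and avoids border strips and the content lemma entirely. Since it works for every shape, it also yields the paper's lifting theorem for $\SYT(\lambda)^m$. The price is heavier machinery, concentrated in your Step 1.

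Step 1 is true exactly, with no $q$-shift, but your sketch of its proof is off target. Tensoring with $\wedge^k\mathbb{C}^n$ corresponds to the Kronecker product with $e_kh_{n-k}$, not to the ordinary product $s_\lambda e_k$ (which lives in degree $n+k$). Here is a clean proof:
\begin{enumerate}
\item Dividing the paper's content factorization by $\prod_{i=1}^n(1-q^i)$ gives $q^{\kappa(\lambda)}\prod_{\square}(1+tq^{c(\square)})/(1-q^{h_\square})$.
\item This is the principal specialization $hs_\lambda(1,q,q^2,\dots;t,tq,tq^2,\dots)$ of the hook Schur function.
\item By the super Cauchy identity, it is the bigraded multiplicity of $S^\lambda$ in $\mathbb{C}[x_1,\dots,x_n]\otimes\wedge[\theta_1,\dots,\theta_n]$.
\item Chevalley's theorem $\mathbb{C}[\mathbf{x}]\cong\mathbb{C}[\mathbf{x}]^{\mathfrak{S}_n}\otimes R$ strips off the invariants. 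Hence $f^\lambda_{\pm k}(q)$ is the graded multiplicity of $S^\lambda$ in $R\otimes\wedge^k\mathbb{C}^n$, i.e.\ the fake degree of $S^\lambda\otimes\wedge^k\mathbb{C}^n$.
\end{enumerate}

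A few smaller points. Springer's theorem needs no root-of-unity correction here, since $\chi^V(c^d)\in\ZZ$. You also need not revisit the Kazhdan--Lusztig basis: Rhoades's CSP plus Springer already gives $\chi^\lambda(c^d)=\omega^{d\kappa(\lambda)}$ times the number of $\pr^d$-fixed tableaux in $\SYT(\lambda)$, with $\kappa(\lambda)=\sum_i(i-1)\lambda_i$. So $\kappa$ is Rhoades's twist, not something defined by bookkeeping. Finally, your product literally yields the factor $\omega^{-d\gamma(n,k)-d\kappa(\lambda)}$. This agrees with the stated $\omega^{d(\gamma(n,k)-\kappa(\lambda))}$ only because $\omega^{d\gamma(n,k)}=\pm1$ whenever the fixed-point count is nonzero, and you should say so.
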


We also consider the case of arbitrary shapes. While a CSP may not exist for a single copy of $\SYT(\lambda)$ due to negative character evaluations, \cite{APRU2021} have shown that for any shape $\lambda$, a CSP exists for the $m$-fold Cartesian product $\SYT(\lambda)^m$ provided the polynomial evaluations are non-negative (which is always true if $m$ is even). We also extend this result to the signed case.

\begin{theorem}[Main Theorem B]\label{thm:mainB}
    Let $\lambda$ be a partition of $n$. For any $m \ge 1$, there exists a cyclic group action of order $n$ on $\SYT_{\pm k}(\lambda)^m$ such that the triple
    \[
    (\SYT_{\pm k}(\lambda)^m, C_n, (q^{\gamma(n,k)} f^\lambda_{\pm k}(q))^m)
    \]
    exhibits the cyclic sieving phenomenon  if and only if for all $d \mid n$, the evaluation $(f^\lambda(\xi^d))^m$ is a non-negative integer, where $\xi$ is a primitive $n$-th root of unity. In particular, if $m$ is even, such a CSP always exists.
\end{theorem}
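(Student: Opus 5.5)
The plan is to split a signed standard tableau into its two independent pieces of data, use a known cyclic sieving result for each piece, and recombine them with a product lemma. Concretely, an element of $\SYT_{\pm k}(\lambda)$ is a pair $(T,S)$ with $T\in\SYT(\lambda)$ and $S$ a $k$-subset of $[n]$ (the negative set). So there is a bijection
\[
\SYT_{\pm k}(\lambda)^m \;\cong\; \SYT(\lambda)^m \times \binom{[n]}{k}^m .
\]
The first step is to establish, from Armon--Swanson's definition of the super major index, a factorization of the generating function of the form
\[
q^{\gamma(n,k)} f^\lambda_{\pm k}(q) \equiv f^\lambda(q)\cdot \qbinom{n}{k} \pmod{q^n-1}.
\]
The point is that $\gamma(n,k)=(n-1)\binom{k}{2}$ should exactly absorb the shift that the super major index adds over $\maj$. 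I would first prove the exact identity $f^\lambda_{\pm k}(q)=q^{c}f^\lambda(q)\qbinom{n}{k}$, or a close variant, by summing over negative sets. I would then check that $q^{\gamma(n,k)+c}\equiv 1 \pmod{q^n-1}$, possibly after using the symmetry of $\qbinom{n}{k}$ and the fact that $\binom{k}{2}(n-1)\equiv -\binom{k}{2}\pmod n$. This is the same twist that appears in Theorem~\ref{thm:mainA}, so I expect its computation to be reusable here.

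Next I would assemble the sufficiency direction from three ingredients.
\begin{enumerate}
\item By \cite{APRU2021}, whenever $(f^\lambda(\xi^d))^m\in\ZZ_{\ge 0}$ for all $d\mid n$, there is an action of $C_n$ on $\SYT(\lambda)^m$ with sieving polynomial $f^\lambda(q)^m$.
\item By Reiner--Stanton--White \cite{RSW2004}, rotation of $k$-subsets of $[n]$ exhibits the CSP with $\qbinom{n}{k}$. Taking $m$ copies with the diagonal action gives sieving polynomial $\qbinom{n}{k}^m$.
\item A product lemma: if $(X,C_n,a(q))$ and $(Y,C_n,b(q))$ are CSPs, then so is $(X\times Y,C_n,a(q)b(q))$ under the diagonal action. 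This holds because fixed points of $g$ on $X\times Y$ are pairs of fixed points, and evaluation at $\xi^d$ is multiplicative.
\end{enumerate}
Combining these with the factorization yields the desired triple, since polynomials congruent modulo $q^n-1$ give the same evaluations at $n$-th roots of unity. For even $m$, $f^\lambda(\xi^d)$ is a (possibly negative) integer, being up to sign a character value of $S_n$. So its $m$-th power is automatically non-negative, which gives the final sentence of the theorem.

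For necessity, if any CSP exists then every evaluation $(\xi^{d\gamma}f^\lambda_{\pm k}(\xi^d))^m = f^\lambda(\xi^d)^m\,\qbinom[\xi^d]{n}{k}^m$ must be a non-negative integer, because it counts fixed points. Since $\qbinom[\xi^d]{n}{k}$ is itself a non-negative integer (a fixed-point count for subsets), a negative $f^\lambda(\xi^d)^m$ forces a negative product whenever the binomial factor is nonzero.

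I expect the main obstacle to be twofold. The first part is nailing down the exact factorization and twist of $f^\lambda_{\pm k}$ from Armon--Swanson's definition. The second is the degenerate case of the converse where $\qbinom[\xi^d]{n}{k}=0$, which happens when $n/\gcd(n,d)\nmid k$. There I would try to show that for such $d$ the relevant condition on $f^\lambda(\xi^d)$ is implied by its value at a divisor $d'$ with the same behaviour. If that fails, I would read the ``only if'' as concerning those $d$ with nonzero binomial evaluation, or reduce to the case $k=0$.
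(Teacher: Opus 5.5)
Your overall architecture matches the paper's. Its lifting theorem uses exactly the action $g\times\cyc^m$ (the APRU action on $\SYT(\lambda)^m$ times rotation of the negative sets), the product lemma, and a comparison of evaluations at $\zeta=\xi^d$.

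The genuine gap is your first step. The exact identity $f^\lambda_{\pm k}(q)=q^{c}f^\lambda(q)\qbinom{n}{k}$ fails in general. By Armon--Swanson, $f^\lambda_{\pm k}(q)=f^\lambda(q)\,[t^k]\prod_{\square\in\lambda}(1+tq^{c(\square)})$. Already for $k=1$ your identity would force $\sum_{\square}q^{c(\square)}=q^{c}[n]_q$, i.e.\ distinct consecutive contents, which happens only for hooks; compare the two polynomials in the paper's Remark for $\lambda=(3,3)$. Your twist check also fails. For the single column $\lambda=(1^n)$, where the identity does hold with $c=\binom{k}{2}-k(n-1)$, one gets $q^{\gamma(n,k)+c}\equiv q^{k}\not\equiv 1 \pmod{q^n-1}$ for $0<k<n$.

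The congruence $q^{\gamma(n,k)}f^\lambda_{\pm k}\equiv f^\lambda\qbinom{n}{k}\pmod{q^n-1}$ is nevertheless true. It is not a formal consequence of the definition, though; it rests on a dichotomy at each primitive $s$-th root of unity $\zeta$ with $s\mid n$:
\begin{itemize}
\item Either $\BST(\lambda,s)=\emptyset$, and then $f^\lambda(\zeta)=0$ by James--Kerber.
\item Or $\lambda$ is tiled by $s$-strips, so its contents are equidistributed mod $s$ and $\prod_{\square}(1+t\zeta^{c(\square)})=(1-(-t)^s)^{n/s}$. The $t^k$-coefficient of this is $0$ if $s\nmid k$ and $(-1)^{k+k/s}\binom{n/s}{k/s}$ otherwise. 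In the latter case $\zeta^{\gamma(n,k)}=\zeta^{-\binom{k}{2}}=(-1)^{k+k/s}$ cancels the sign, matching $f^\lambda(\zeta)\qbinom[q=\zeta]{n}{k}$.
\end{itemize}
This vanishing-or-equidistribution argument (or an equivalent, such as the $s$-core characterization) is the missing idea. Summing over negative sets will not supply it.

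On necessity, your worry is justified, but your first remedy cannot work, because the ``only if'' direction is false as literally stated. Take $\lambda=(2,1)$, $n=3$, $k=1$, $m=1$. Then $f^{(2,1)}(\xi)=\xi+\xi^2=-1$, yet $q^{\gamma(3,1)}f^{(2,1)}_{\pm1}(q)=(1+q)(1+q+q^2)$ takes the values $6,0,0$. These are realized by the free action $\mathrm{id}\times\cyc$ on $\SYT(2,1)\times\binom{[3]}{1}$.

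Your fallback is the right statement: non-negativity of $(f^\lambda(\xi^d))^m$ is forced precisely for those $d\mid n$ with $(n/d)\mid k$, hence for all $d$ when $k=0$. Your argument proves this once the evaluation formula above is established. The paper's own proof of this theorem likewise only yields the ``if'' direction, by feeding the APRU base case into its lifting theorem.
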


The paper is organized as follows. In \Cref{sec:background}, we review the necessary background on standard Young tableaux, the cyclic sieving phenomenon, and the connection between evaluations of major index polynomials at roots of unity and the enumeration of border strip tableaux. In \Cref{sec:super_tableaux}, we introduce signed standard tableaux and the super major index, and we present a key factorization of its generating function. \Cref{sec:proof} contains the proof of our main results; we state and prove a unified theorem and show how both main theorems follow as special cases.

\section{Background}\label{sec:background}

\subsection{Standard Young tableaux}
A \Dfn{partition} of a positive integer $n$ is a sequence 
\[
\lambda = (\lambda_1 \ge \lambda_2 \ge \cdots \ge \lambda_\ell > 0)
\]
of positive integers whose sum is $n$.  The Young diagram of shape $\lambda$ is a left-justified array of $\lambda_i$ boxes in row $i$.
If the Young diagram of $\lambda$ is a rectangle, i.e., $\lambda = (\underbrace{b,b,\dots, b}_{\text{$a$ copies}})$ for some $a,b \ge 1$, we say $\lambda$ is a \Dfn{rectangular partition} and write $\lambda=a\times b$.

If $\lambda$ and $\mu$ are partitions, we say $\lambda$ \Dfn{contains} $\mu$ if the Young diagram of $\mu$ is contained in that of $\lambda$. In this case, the \Dfn{skew shape} $\lambda/\mu$ is the set-theoretic difference of the two diagrams.

A \Dfn{standard Young tableau} (SYT) of shape $\lambda$ is a filling of the boxes of the Young diagram with the numbers $1,2,\dots,n$, each used exactly once, such that the entries strictly increase left-to-right in each row and strictly increase top-to-bottom in each column.  We denote the set of standard Young tableaux of shape $\lambda$ by $\SYT(\lambda)$.

Given a tableau $T \in \SYT(\lambda)$, an integer $i \in \{1,2,\dots,n-1\}$ is called a \Dfn{descent} of $T$ if the entry $i+1$ appears in a strictly lower row than the entry $i$.  The \Dfn{descent set} of $T$ is
\[
\DES(T) = \{\, i \in \{1,\dots,n-1\} : i+1 \text{ lies in a lower row than } i \,\}.
\]
The \Dfn{major index} of $T$ is the sum of its descents:
\[
\maj(T) = \sum_{i \in \DES(T)} i.
\]

We denote the generating function for the major index by
\[
f^\lambda(q) \coloneqq \sum_{T \in \SYT(\lambda)} q^{\maj(T)}.
\]

\begin{example}\label{ex:maj}
    Consider the following standard Young tableau:
    \[
    T = \ytableaushort{124,35,67}
    \]
    The descent set is $\DES(T) = \{2, 4, 5\}$. The major index is $\maj(T) = 2 + 4 + 5 = 11$.
\end{example}

For a cell $\square$ in the Young diagram of $\lambda$ we denote its row and column by $\row(\square)$ and $\col(\square)$, respectively, where the top-left cell is at row $1$ and column $1$. For a cell $\square \in \lambda$ its \Dfn{content} is defined as $c(\square) = \col(\square) - \row(\square)$.
The \Dfn{hook} of $\square$ consists of the cell $\square$ itself, all cells to its right in the same row, and all cells below it in the same column. The number of cells in the hook is the \Dfn{hook length}, denoted $h_\square$.

\begin{example}
    Let $\lambda = (6,5,4,2,2,2)$. The contents and hook lengths of the cells are:
    \[
\ytableaushort{012345,{\tm1}0123,{\tm2}{\tm1}01,{\tm3}{\tm2},{\tm4}{\tm3},{\tm5}{\tm4}}\qquad\qquad
\ytableaushort{{11}{10}6531,98431,7621,43,32,21}\]
\end{example}

The total number of standard Young tableaux of a given shape is given by the celebrated \Dfn{hook-length formula} of Frame, Robinson, and Thrall \cite{FrameRobinsonThrall1954}.
\begin{theorem}[Hook-Length Formula]
    Let $\lambda$ be a partition of $n$. The number of standard Young tableaux of shape $\lambda$ is
    \[
        |\SYT(\lambda)| = \frac{n!}{\prod_{\square \in \lambda} h_\square},
    \]
    where the product is over all cells $u$ in the Young diagram of $\lambda$.
\end{theorem}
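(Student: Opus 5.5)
The natural route is induction on $n$, using the corner-removal recursion that standard Young tableaux satisfy. Write $f_\lambda = |\SYT(\lambda)|$ and $F(\lambda) = n!/\prod_{\square\in\lambda} h_\square$. In any $T\in\SYT(\lambda)$ the entry $n$ sits in a removable corner $c$ of $\lambda$, and deleting it gives a bijection
\[
\SYT(\lambda) \;\longleftrightarrow\; \bigsqcup_{c\ \text{corner}} \SYT(\lambda\setminus c).
\]
Hence $f_\lambda=\sum_c f_{\lambda\setminus c}$, with $f_\emptyset=1$. By induction it suffices to prove that $F$ obeys the same recursion:
\[
\sum_{c} \frac{F(\lambda\setminus c)}{F(\lambda)} \;=\; 1 .
\]

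The first step is to compute each ratio $F(\lambda\setminus c)/F(\lambda)$ from the hooks. Removing a corner $c=(r,s)$ changes only the hooks of cells in row $r$ and column $s$: each of those hook lengths drops by one, and the hook of $c$ itself, which is $1$, disappears. This gives
\[
\frac{F(\lambda\setminus c)}{F(\lambda)} \;=\; \frac1n \prod_{j<s}\frac{h_{(r,j)}}{h_{(r,j)}-1}\prod_{i<r}\frac{h_{(i,s)}}{h_{(i,s)}-1}.
\]

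The main obstacle is the second step: showing that these ratios sum to $1$. I would follow the Greene--Nijenhuis--Wilf probabilistic hook walk. Choose a uniformly random cell of $\lambda$, with probability $1/n$. From the current cell, jump to a uniformly random other cell of its hook, with probability $1/(h-1)$ each, and stop when you reach a corner. The walk always terminates at some corner, so the probabilities of ending at the various corners sum to $1$.

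The key lemma is that the probability of ending at $c$ equals the product above. To prove it, expand
\[
\prod_{j<s}\Bigl(1+\frac{1}{h_{(r,j)}-1}\Bigr)\prod_{i<r}\Bigl(1+\frac{1}{h_{(i,s)}-1}\Bigr)
\]
as a sum over pairs of subsets $A\subseteq\{\text{rows}<r\}$ and $B\subseteq\{\text{columns}<s\}$. The term indexed by $(A,B)$ should match the probability that the walk starts at $(\min A,\min B)$ and visits exactly the rows $A\cup\{r\}$ and columns $B\cup\{s\}$. This is proved by induction on $|A|+|B|$. It uses the hook identity
\[
h_{(a,b)}-1 \;=\; (h_{(a,s)}-1)+(h_{(r,b)}-1),
\]
valid because $c$ is a corner; it lets the first-step probabilities $1/(h_{(a,b)}-1)$ factor correctly when one conditions on whether the first move is horizontal or vertical.

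Combining the three steps: the corner ratios sum to $1$, so $F$ satisfies the recursion $F(\lambda)=\sum_c F(\lambda\setminus c)$ with $F(\emptyset)=1$. By induction on $n$, $F(\lambda)=f_\lambda$, which is the hook-length formula. The bookkeeping in the hook-walk induction, in particular verifying the additive hook identity for arbitrary cells $(a,b)$ with $a<r$, $b<s$, is where care is needed; everything else is routine.
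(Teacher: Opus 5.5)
Your proof is correct. It is the Greene--Nijenhuis--Wilf hook-walk argument. The corner-removal recursion, the ratio $F(\lambda\setminus c)/F(\lambda)$, and the additive identity $h_{(a,b)}-1=(h_{(a,s)}-1)+(h_{(r,b)}-1)$ all check out; the identity follows from $\lambda_r=s$ and $\lambda'_s=r$ and holds for all $a\le r$, $b\le s$. The first-step decomposition in the induction on $|A|+|B|$ then goes through. Two small bookkeeping fixes are needed:
\begin{itemize}
\item The starting cell should be $(\min(A\cup\{r\}),\min(B\cup\{s\}))$, so that empty $A$ or $B$ is allowed.
\item The $(A,B)$-term of the expanded product equals $n$ times the walk probability, not the probability itself, because of the factor $1/n$ for choosing the starting cell.
\end{itemize}

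Your route differs from the paper's, which gives no proof here. It quotes the hook-length formula as classical background, citing Frame--Robinson--Thrall. In the paper's own framework the statement is also just the $q=1$ specialization of the $q$-hook-length formula \eqref{eq:q hook length formula}, which it quotes from Stanley. That $q$-version, together with the Armon--Swanson refinement, is what the paper actually uses: in \Cref{cor:content_formula} and in evaluating $f^\lambda$ at roots of unity.

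What each buys: your argument is short, elementary and self-contained, but it only counts tableaux. The cited $q$-route (principal specialization of Schur functions, or $P$-partitions) is heavier, but it carries the major-index information the rest of the paper depends on. The hook walk as you set it up does not yield that $q$-statement.
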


\subsection{Promotion}
Schützenberger promotion is a bijection on standard Young tableaux (SYT) of a fixed shape.  
Although we do not require the description of promotion itself for the statement of our main results, we recall its definition here for completeness, as it is the group action underlying our CSP.

Given a tableau $T \in \SYT(\lambda)$ of size $n$, its promotion $\pr(T)$ is defined as follows:

\begin{enumerate}
    \item Remove the entry $1$, creating an empty cell.
    \item Perform \Dfn{jeu-de-taquin slides}: at each step slide into the empty cell the smaller of the entries immediately to its right or below, until the empty cell reaches an outer corner.
    \item Insert $n+1$ into the empty cell, yielding a tableau with entries $\{2,3,\dots,n+1\}$.
    \item Subtract $1$ from every entry.
\end{enumerate}

The resulting tableau is the promoted tableau $\pr(T)$.  
Promotion is invertible, preserves shape, and acts with finite order; for rectangular shapes $a \times b$ it has order $ab$. See e.g. \cite[Theorem 1.1]{Rhoades2010}.

\subsection{$q$-analogues}
We recall standard $q$-analogues. For a positive integer $n$, the \Dfn{$q$-integer} is defined as
\[
    [n]_q \coloneqq 1 + q + \dots + q^{n-1} = \frac{1-q^n}{1-q},
\]
with $[0]_q = 0$. The \Dfn{$q$-factorial} is defined by $[n]_q! \coloneqq \prod_{i=1}^n [i]_q$, with $[0]_q! = 1$.
The \Dfn{$q$-binomial coefficient} is given by
\[
    \qbinom{n}{k} \coloneqq \frac{[n]_q!}{[k]_q! [n-k]_q!}
\]
for $0 \le k \le n$, and $0$ otherwise.

The generating function for the major index over $\SYT(\lambda)$ is given by a $q$-analogue of the hook-length formula~\cite[Corollary 7.21.5]{Stanley1999}.
\begin{theorem}[$q$-Hook-Length Formula]
    Let $\lambda$ be a partition of $n$. The major index generating function is given by
    \begin{equation}\label{eq:q hook length formula}
        f^\lambda(q) = q^{\kappa(\lambda)} \frac{[n]_q!}{\prod_{\square \in \lambda} [h_{\square}]_q},
    \end{equation}
    where $\kappa(\lambda) = \sum_{i \ge 1} (i-1)\lambda_i$.
\end{theorem}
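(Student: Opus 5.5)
The $q$-hook-length formula~\eqref{eq:q hook length formula} is cited from Stanley, so I would not reprove it from scratch. Instead I would sketch how it follows from standard symmetric function theory. The plan has three steps.

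\textbf{Step 1 (reduction to a principal specialization).} Use the quasisymmetric expansion of the Schur function,
\[
s_\lambda = \sum_{T\in\SYT(\lambda)} F_{\DES(T)},
\]
where $F_D$ is Gessel's fundamental quasisymmetric function. Under the stable principal specialization $x_i\mapsto q^{i-1}$, one has
\[
F_D(1,q,q^2,\dots)=\frac{q^{\sum_{i\in D} i}}{(1-q)(1-q^2)\cdots(1-q^n)}.
\]
Summing over $T$ therefore gives
\[
s_\lambda(1,q,q^2,\dots)=\frac{f^\lambda(q)}{(1-q)\cdots(1-q^n)}.
\]
So it suffices to compute $s_\lambda(1,q,q^2,\dots)$.

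\textbf{Step 2 (hook-content specialization).} Next compute the principal specialization of $s_\lambda$ itself. One route is to write $s_\lambda(1,q,q^2,\dots)$ as the generating function of semistandard tableaux with entries in $\{1,2,\dots\}$, weighted by $q^{\sum(T(\square)-1)}$. Starting from the bialternant formula for $s_\lambda(1,q,\dots,q^{N-1})$ and evaluating the Vandermonde quotient, one obtains
\[
s_\lambda(1,\dots,q^{N-1})=q^{\kappa(\lambda)}\prod_{\square}\frac{1-q^{N+c(\square)}}{1-q^{h_\square}}.
\]
Letting $N\to\infty$ as formal power series then gives
\[
s_\lambda(1,q,\dots)=\frac{q^{\kappa(\lambda)}}{\prod_{\square}(1-q^{h_\square})}.
\]
The power $q^{\kappa(\lambda)}$ arises because the minimal semistandard filling places $i$ in row $i$, which contributes $\sum_i (i-1)\lambda_i$.

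\textbf{Step 3 (combine).} Equating the two expressions for $s_\lambda(1,q,q^2,\dots)$ gives
\[
f^\lambda(q)=q^{\kappa(\lambda)}\prod_{i=1}^n(1-q^i)\Big/\prod_\square(1-q^{h_\square}).
\]
Dividing numerator and denominator by $(1-q)^n$ turns this into~\eqref{eq:q hook length formula}.

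The main obstacle is Step~2, specifically the evaluation of the $q$-Vandermonde ratio into hook form. For this I would use the standard identity
\[
\prod_{i<j}\frac{1-q^{\lambda_i-\lambda_j+j-i}}{1-q^{j-i}} = q^{\text{(shift)}}\,\frac{\prod_\square(1-q^{N+c(\square)})}{\prod_\square(1-q^{h_\square})},
\]
proved by induction on the rows of $\lambda$. The exponent bookkeeping for $\kappa(\lambda)$ would be checked using $\sum_\square (\row(\square)-1)=\kappa(\lambda)$.
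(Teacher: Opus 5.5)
Your route (quasisymmetric expansion, principal specialization, hook-content formula) is the standard symmetric-function argument behind the result the paper cites from Stanley. The paper itself gives no proof. Steps 2 and 3 are fine, but the specialization formula in Step 1 is wrong.

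With the convention under which $s_\lambda=\sum_T F_{\DES(T)}$ holds, $F_D$ is the sum of $x_{i_1}\cdots x_{i_n}$ over $i_1\le\cdots\le i_n$ with $i_j<i_{j+1}$ for $j\in D$. A forced strict step at position $j$ raises all $n-j$ later exponents by one, so
\[
F_D(1,q,q^2,\dots)=\frac{q^{\sum_{i\in D}(n-i)}}{(1-q)(1-q^2)\cdots(1-q^n)},
\]
not $q^{\sum_{i\in D} i}$ over that denominator. For instance, with $n=3$ and $D=\{1\}$ the minimal exponent sequence is $(0,1,1)$, giving leading term $q^2$, not $q^1$. Summing over $T$ therefore produces the comajor index $\sum_{i\in\DES(T)}(n-i)$, not $\maj(T)$. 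The two differ tableau by tableau: for $\lambda=(2,1)$ they are swapped between the two tableaux. As written, Step 1 does not yield $f^\lambda(q)$.

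The missing ingredient is that $\maj$ and the comajor index are equidistributed on $\SYT(\lambda)$. There are two ways to supply it.
\begin{itemize}
\item Cite Sch\"utzenberger's evacuation, a shape-preserving involution with $\DES(\mathrm{evac}\,T)=\{n-i : i\in\DES(T)\}$.
\item More self-containedly, work in $N$ variables. Reversing the order of the variables turns $F_D$ into $F_{\{n-i\,:\,i\in D\}}$ and leaves the symmetric function $s_\lambda$ unchanged. Hence $s_\lambda(1,q,\dots,q^{N-1})=\sum_T F_{\{n-i\,:\,i\in\DES(T)\}}(1,q,\dots,q^{N-1})$. Letting $N\to\infty$ and using the corrected formula gives exponent exactly $\sum_{i\in\DES(T)} i=\maj(T)$.
\end{itemize}
With either repair your argument is complete.

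A small point on Step 2: the ``shift'' in your Vandermonde identity is actually $0$. The factor $q^{\kappa(\lambda)}$ comes from pulling $q^{\lambda_j+N-j}$ and $q^{N-j}$ out of the factors $q^{\lambda_i+N-i}-q^{\lambda_j+N-j}$ and $q^{N-i}-q^{N-j}$. This leaves $\prod_{i<j}q^{\lambda_j}=q^{\kappa(\lambda)}$.
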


\subsection{The Cyclic Sieving Phenomenon}
Let $X$ be a finite set of combinatorial objects, let $C = \langle g \rangle$ be a cyclic group of order $n$ acting on $X$, and let $P(q) \in \NN[q,q^{-1}]$ be a Laurent polynomial\footnote{Note that classically, one considers polynomials and not Laurent polynomials. We allow the more general setup here, to avoid extra multiplications with appropriate powers of $q^n$ in our theorems.} with non-negative integer coefficients. Let $\xi$ be a complex primitive $n$-th root of unity.

\begin{definition}[\cite{RSW2004}]
    The triple $(X, C, P)$ is said to exhibit the \Dfn{cyclic sieving phenomenon} (CSP) if for all integers $d \ge 0$,
    \[
        |\{x \in X : g^d \cdot x = x\}| = P(\xi^d).
    \]
\end{definition}

We also write $X^g$ for the set of fixed points of $g$ on $X$ and call the triple $(X, C, P)$ a \Dfn{CSP triple} if it exhibits the cyclic sieving phenomenon.

Recall two fundamental CSPs for subsets and rectangular standard Young tableaux that we will utilize.
\begin{theorem}[\cite{RSW2004}]\label{thm:subsets}
    Let $0 \le k \le n$. The triple
    \[
        \left( \binom{[n]}{k}, \langle \cyc \rangle, \qbinom{n}{k} \right)
    \]
    exhibits the cyclic sieving phenomenon, where $\cyc$ acts on $k$-element subsets of $[n]$ by cyclic shift, i.e., decrementing each entry greater than $1$ by $1$ and changing $1$ to $n$.
    In particular, for a divisor $d \mid n$ and a primitive $n$-th root of unity $\xi$, let $s=n/d$. Then
    \begin{equation}\label{eq:q_binom_eval}
        \qbinom[q=\xi^d]{n}{k} = \begin{cases} \binom{d}{k/s} & \text{if } s \mid k, \\ 0 & \text{otherwise.} \end{cases}
    \end{equation}
\end{theorem}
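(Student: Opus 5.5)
The plan is to compute both sides of the cyclic sieving identity explicitly and check that they agree. For an arbitrary integer $d \ge 0$, put $g=\gcd(d,n)$ (with $g=n$ when $d=0$). Then $\xi^d$ is a primitive $(n/g)$-th root of unity, and $\cyc^d$ generates the same subgroup of $\langle\cyc\rangle$ as $\cyc^g$. Both sides therefore depend only on $g$, so it suffices to treat $d \mid n$. Throughout, write $s=n/d$ and $\omega=\xi^d$, a primitive $s$-th root of unity.

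\emph{Fixed points.} As a permutation of $[n]$, $\cyc$ is an $n$-cycle, so $\cyc^d$ decomposes $[n]$ into $d$ orbits, each of size $s$ (the residue classes modulo $d$). A $k$-subset is fixed by $\cyc^d$ exactly when it is a union of such orbits. Hence there are no fixed subsets unless $s \mid k$. If $s\mid k$, a fixed subset is a choice of $k/s$ of the $d$ orbits, giving $\binom{d}{k/s}$ fixed points.

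\emph{Evaluation.} I would write
\[
\qbinom{n}{k}=\prod_{j=1}^{n}(1-q^j)\Big/\Big(\prod_{j=1}^{k}(1-q^j)\prod_{j=1}^{n-k}(1-q^j)\Big)
\]
and sort the factors by whether $s\mid j$.

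\begin{itemize}
\item \textbf{Vanishing factors.} A factor $1-q^j$ vanishes at $\omega$ iff $s\mid j$.
\begin{itemize}
\item The numerator has exactly $d$ such factors.
\item The denominator has $\lfloor k/s\rfloor+\lfloor (n-k)/s\rfloor$ of them. This equals $d$ if $s\mid k$ and $d-1$ otherwise.
\item So when $s\nmid k$, the numerator vanishes to higher order at $\omega$ than the denominator. Since $\qbinom{n}{k}$ is a polynomial, its value at $\omega$ is $0$.
\end{itemize}
\item \textbf{Non-vanishing factors.} For $j$ with $s\nmid j$, the value $1-\omega^j$ depends only on $j \bmod s$.
\begin{itemize}
\item In $[1,n]$ each nonzero residue class mod $s$ occurs exactly $d$ times.
\item When $s\mid k$, it occurs $k/s$ times in $[1,k]$ and $(n-k)/s$ times in $[1,n-k]$.
\item Since $k/s+(n-k)/s=d$, these factors cancel exactly.
\end{itemize}
\item \textbf{Limit of the vanishing factors.} The remaining quotient is $\prod_{m=1}^{d}(1-q^{sm})$ divided by the analogous products up to $k/s$ and $(n-k)/s$.
\begin{itemize}
\item Set $u=q^s$. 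This quotient is $\qbinom[u]{d}{k/s}$.
\item As $q\to\omega$ we have $u\to 1$, so it tends to $\binom{d}{k/s}$.
\end{itemize}
\end{itemize}

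This proves \eqref{eq:q_binom_eval}, and it matches the fixed-point count above, so the triple is a CSP. The only delicate step is the evaluation when $s \mid k$, where vanishing factors appear in both numerator and denominator. The substitution $u=q^s$ handles this cleanly, and the polynomiality of $\qbinom{n}{k}$ justifies evaluating by taking the limit. Alternatively, one could cite the $q$-Lucas theorem directly.
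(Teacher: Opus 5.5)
Your proof is correct. The paper does not prove this statement; it quotes it from \cite{RSW2004}, and your argument is the standard direct proof of it: fixed subsets are unions of the $d$ orbits of $\cyc^d$, and $\qbinom{n}{k}$ is evaluated at a primitive $s$-th root of unity by separating the factors $1-q^j$ with $s\mid j$ and substituting $u=q^s$.

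The only thing to tighten is the reduction to $d\mid n$. The polynomial side depends only on $\gcd(d,n)$ because your evaluation uses nothing about $\omega$ except that it is a primitive $s$-th root of unity. So it applies verbatim to $\xi^d$ for arbitrary $d$, and you should say this rather than asserting the dependence on $\gcd(d,n)$ before proving it.
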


\begin{theorem}[\cite{Rhoades2010}]\label{thm:Rhoades}
    Let $\lambda \vdash n$ be a rectangular partition. The triple
    \[
        \left( \SYT(\lambda), \langle \pr \rangle, q^{-\kappa(\lambda)}f^\lambda(q) \right)
    \]
    exhibits the cyclic sieving phenomenon, where $\pr$ acts by Schützenberger promotion.
\end{theorem}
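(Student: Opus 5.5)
The statement just above is Rhoades's theorem (\Cref{thm:Rhoades}), which the paper cites rather than proves. I would follow the representation-theoretic route of \cite{Rhoades2010}. It has three steps.

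First, I would realize $\SYT(\lambda)$ for $\lambda = a\times b$ as a basis of the irreducible $\mathfrak S_n$-module $S^\lambda$. The right basis is the Kazhdan--Lusztig cellular basis, which can be indexed by $\SYT(\lambda)$. In that basis the long cycle $c=(1\,2\,\cdots\,n)$ acts by a monomial matrix, up to a global sign $\epsilon$, and the underlying permutation of basis elements is promotion.

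Granting this, the CSP becomes a character computation. The number of fixed points of $\pr^d$ equals the trace of $c^d$ on $S^\lambda$ in this basis, up to the power $\epsilon^d$. That trace is the character value $\chi^\lambda(c^d)$, where $c^d$ has cycle type $(s^{d'})$ with $s=n/\gcd(n,d)$ and $d'=\gcd(n,d)$.

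Second, I would evaluate the polynomial side. Using the $q$-hook-length formula \eqref{eq:q hook length formula}, $q^{-\kappa(\lambda)}f^\lambda(q)=[n]_q!/\prod_\square [h_\square]_q$. I would evaluate this at $q=\xi^d$ by counting the factors of $[n]_q!$ and of $\prod_\square [h_\square]_q$ that vanish, i.e. those whose index is divisible by $s$, and taking limits of the ratios $[ms]_q/[m's]_q \to m/m'$. The outcome should be $0$ unless the $s$-core of $\lambda$ is empty. When the core is empty, the evaluation should be $\pm$ the number of standard $s$-ribbon (border strip) tableaux of shape $\lambda$, with a sign given by the total height. The Murnaghan--Nakayama rule says this same signed count equals $\chi^\lambda(s^{d'})$. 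For a rectangle, every $s$-ribbon tiling has a parity of total height depending only on $(a,b,s)$, and I would use this to show the signs agree with the $\epsilon^d$ from the first step.

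Third, I would match the two sides: the fixed-point count from the first step and the evaluation from the second step are equal for every $d$.

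The main obstacle is the first step: showing that $c$ acts on the Kazhdan--Lusztig basis of $S^{a\times b}$ as promotion times a sign. I would first try Rhoades's approach, which passes to the $\mathfrak{sl}$-web or Temperley--Lieb picture for $a=2$ and, in general, uses the Berenstein--Zelevinsky/Stembridge formula for how the long element $w_0$ acts on Kazhdan--Lusztig bases (through evacuation). I would then factor $c$ through $w_0$ and $w_0^{(n-1)}$ to identify $c$ with promotion. The rectangular hypothesis enters exactly here: it guarantees that the relevant left cell modules coincide and that the sign is global. An alternative is the web-basis route of Petersen--Pylyavskyy--Rhoades, but that only covers $a\le 3$, so I would rely on the Kazhdan--Lusztig argument.
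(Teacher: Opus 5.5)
Your architecture is Rhoades's own; the paper only cites it. You use a Kazhdan--Lusztig basis of $S^{a\times b}$ on which $c=(1\,2\,\cdots\,n)$ acts by $\epsilon\cdot\pr$, so that $|\SYT(\lambda)^{\pr^d}|=\epsilon^d\chi^\lambda(c^d)$, and then evaluate the polynomial at $\xi^d$. Step~1 is the whole content of Rhoades's theorem, and your sketch only asserts its crux: that for a one-corner shape the KL basis is compatible with restriction to $\mathfrak S_{n-1}$, so that $w_0^{(n-1)}$ acts by signed partial evacuation. Rhoades's proof of this step relies on Skandera's characterization of the dual canonical basis via Kazhdan--Lusztig immanants. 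Deferring it to the citation is acceptable here, since the paper does the same. The genuine problem is in Steps~2--3.

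\textbf{The Step~2 identity is off by a root of unity.} The Murnaghan--Nakayama signed count equals $f^\lambda(\xi^d)$, i.e.\ the evaluation \emph{including} the factor $q^{\kappa(\lambda)}$. This is Springer's theorem for the regular element $c$, or equivalently \eqref{eq:MN Rule} combined with Murnaghan--Nakayama. It is not the value of $q^{-\kappa(\lambda)}f^\lambda(q)=[n]_q!/\prod_\square[h_\square]_q$.

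A counterexample is $\lambda=(3,3)$, $d=3$. There $q^{-3}f^{(3,3)}(q)=1+q^2+q^3+q^4+q^6$ equals $3$ at $q=-1$. But all three domino tableaux of $(3,3)$ have odd total height, so $\chi^{(3,3)}(2^3)=-3$. With your bookkeeping, Step~3 would need $\epsilon^3=1$. Yet $\pr^3$ has $3$ fixed points, which forces $\epsilon=-1$.

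The correct relation, for $d\mid n$ and $s=n/d$, is
\[
P(\xi^d)=\xi^{-d\kappa(\lambda)}\chi^\lambda(c^d)=(-1)^{d(a-1)}\chi^\lambda(c^d),
\]
since $\kappa(a\times b)=b\binom a2=n(a-1)/2$. So you must pin down the \emph{value} of the sign, not merely its constancy. Constancy of the total-height parity holds for every shape, so it is not where rectangularity enters. What you need is that every $s$-ribbon tableau of $a\times b$ has sign $(-1)^{(a-1)n/s}$, equivalently $\epsilon=(-1)^{a-1}$.

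This is easy once isolated. By the abacus, $a\times b$ has empty $s$-core only if $s\mid a$ or $s\mid b$. Evaluating the sign on the tiling by horizontal ribbons (if $s\mid b$) or vertical ribbons (if $s\mid a$) gives the claim. Then
\[
P(\xi^d)=|\chi^\lambda(c^d)|=\epsilon^d\chi^\lambda(c^d)=|\SYT(\lambda)^{\pr^d}|.
\]
The middle equality holds because fixed-point counts are non-negative, so you never need $\epsilon$ explicitly.
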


Additionally, Rhoades' result extends to the case of tuples of standard Young tableaux of same arbitrary shape $\lambda$, but here the group action is unknown.
\begin{theorem}[\cite{APRU2021}]\label{thm:APRU}
    Let $\lambda \vdash n$ be any partition. There exists a cyclic group action of order $n$ on $\SYT(\lambda)$ such that the triple
    \[
    (\underbrace{\SYT(\lambda) \times \SYT(\lambda) \times \dots \times \SYT(\lambda)}_{\text{$m$ copies}} , C_n, (f^\lambda(q))^m)
    \]
    exhibits the cyclic sieving phenomenon if and only if for all $d \mid n$, $(f^\lambda(\xi^d))^m$ is a non-negative integer.
    
    In particular, if $m$ is even, such a CSP always exists.
\end{theorem}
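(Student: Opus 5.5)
The plan is to reduce the existence of the action to a purely arithmetic condition on the numbers $a_d \coloneqq (f^\lambda(\xi^d))^m$, and then to verify that condition using the representation-theoretic meaning of $f^\lambda$ at roots of unity. The first step is the standard criterion for a cyclic group. A family of candidate fixed-point counts $(a_d)_{d \mid n}$, with $a_d$ depending only on $\gcd(d,n)$, arises from an action of $C_n$ on a set $X$ with $|X| = a_n$ if and only if the following holds. For each $e \mid n$, the number of points whose stabilizer is exactly $\langle g^{e}\rangle$ must be a non-negative integer divisible by $e$. By M\"obius inversion over the divisor lattice, this number is
\[
N_e=\sum_{e'\mid e}\mu(e/e')\,a_{e'} .
\]
Conversely, given such $N_e$, one builds $X$ as a disjoint union of $N_e/e$ orbits of size $e$ and chooses any bijection with $\SYT(\lambda)^m$, since $a_n = |\SYT(\lambda)|^m$. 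So the theorem reduces to three checks. The first is that the $a_d$ are integers, which is necessary. The second is that each $N_e$ is divisible by $e$. The third is that each $N_e$ is non-negative.

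For integrality and divisibility I would use the Kra\'skiewicz--Weyman interpretation. Reducing $f^\lambda(q)$ modulo $q^n-1$ records the multiplicities of the characters $\omega\mapsto\omega^j$ of $C_n=\langle c\rangle$ in $\operatorname{Res}^{S_n}_{C_n} S^\lambda$, where $c$ is an $n$-cycle. Hence $f^\lambda(\xi^d)=\chi^\lambda(c^d)$, and $(f^\lambda(q))^m \bmod (q^n-1)$ is the character of the genuine representation $\operatorname{Res}(S^\lambda)^{\otimes m}$. Its character values are rational, being values of an $S_n$-character. A rational-valued character of a cyclic group is a $\ZZ$-linear combination of the permutation characters $\operatorname{Ind}_{C_{n/e}}^{C_n}1$ (Artin's induction theorem, which is exact for cyclic groups over $\mathbb{Q}$). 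This gives exactly that the $N_e$ are integers divisible by $e$. The ``only if'' direction is immediate, since fixed-point counts are non-negative integers.

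It remains to prove non-negativity, and this is the step I expect to be the main obstacle. The coefficients in the $\ZZ$-combination above need not be non-negative for an arbitrary rational character. Here I would use Murnaghan--Nakayama. Write $s=n/\gcd(d,n)$. Then $c^d$ has cycle type $(s^{n/s})$, so $\chi^\lambda(c^d)=\pm|\BST_s(\lambda)|$, where $\BST_s(\lambda)$ denotes the set of border strip tableaux of shape $\lambda$ with all strips of size $s$. The sign is constant on this set, since all ribbon tableaux of a fixed shape with equal ribbon sizes have the same total height parity. Under the hypothesis that $a_d\ge 0$, we therefore get $a_d=|\BST_s(\lambda)|^m$ on the nose.

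Non-negativity of $N_e$ is then a statement comparing counts of $m$-tuples of ribbon tableaux for different ribbon sizes. My first attempt would be to realize these numbers as fixed-point counts of an honest action on an auxiliary set. The idea is to use the $s$-quotient and $s$-core bijection for $\lambda$ (the core is empty whenever $\BST_s(\lambda)\neq\emptyset$) and the fact that $s$-ribbon tableaux correspond to standard fillings of the $s$-quotient. I would then show that the function $e\mapsto|\BST_{n/e}(\lambda)|^m$ coincides with the fixed-point data of the $C_n$-set $\SYT(\lambda)^m$ under an action defined recursively, orbit type by orbit type. If that fails, the fallback is a direct character argument. Here one shows that $\operatorname{Res}(S^\lambda)^{\otimes m}$, when all its character values are non-negative, is the linearization of a permutation module. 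The tool would be the induced-character expression $\sum_{e} \frac{N_e}{e}\operatorname{Ind}_{C_{n/e}}^{C_n}1$ with $N_e$ bounded below via the inequality $a_{e}\ge\sum_{e'\mid e,\,e'\neq e}(\text{contributions})$. This inequality would come from an injection of tuples of larger-ribbon tableaux into tuples of smaller-ribbon ones. Finally, for even $m$ the hypothesis holds automatically, since $a_d=(\pm|\BST_s(\lambda)|)^m\ge 0$.
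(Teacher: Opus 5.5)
The paper gives no proof of this statement; it is quoted from \cite{APRU2021}. Your reduction is correct as far as it goes. An action exists iff every $N_e=\sum_{e'\mid e}\mu(e/e')a_{e'}$ is a non-negative multiple of $e$. Your Kra\'skiewicz--Weyman and rational-character argument gives integrality and $e\mid N_e$. Murnaghan--Nakayama with the constant $s$-sign gives $a_e=|\BST(\lambda,n/e)|^m$ under the hypothesis. But $N_e\ge 0$ is the entire substance of the theorem, and you do not prove it.

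Neither of your two routes works as sketched:
\begin{itemize}
\item Defining the action ``orbit type by orbit type'' presupposes that each $N_e/e$ is a non-negative integer, so it is circular.
\item An injection from tuples of larger-ribbon tableaux into tuples of smaller-ribbon ones gives only monotonicity, $a_{e'}\le a_e$ for $e'\mid e$. That suffices for $e=p^k$, since $N_{p^k}=a_{p^k}-a_{p^{k-1}}$. It does not suffice for $e=pq$, where you need $a_{pq}-a_p-a_q+a_1\ge 0$. For that, the images for different ribbon sizes would have to intersect exactly in the image for their lcm, and your sketch does not provide this.
\end{itemize}
Some structure specific to $\chi^\lambda$ must be used, because non-negative values are not enough even for $S_n$-characters. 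For example, $\chi^{(4)}+\chi^{(2,1,1)}$ restricted to $\langle(1\,2\,3\,4)\rangle$ has values $4,2,0,2$, so the generator would fix more points than its square.

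The missing idea is to recognize each $N_e$ as a multiplicity in a genuine character of the smaller group $C_e$. Fix $e\mid n$, put $s=n/e$, and let $\tau\in S_e$ be an $e$-cycle. For $e'\mid e$, the cycle type of $c^{e'}$ is $s$ times that of $\tau^{e'}$. Littlewood's formula for characters on classes of the form $s\rho$ then gives $\chi^\lambda(c^{e'})=\delta_s(\lambda)\,\psi(\tau^{e'})$. Here $\psi=\operatorname{Ind}_{S_{|\lambda^{(0)}|}\times\cdots\times S_{|\lambda^{(s-1)}|}}^{S_e}\bigl(\chi^{\lambda^{(0)}}\times\cdots\times\chi^{\lambda^{(s-1)}}\bigr)$ is built from the $s$-quotient. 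The sign $\delta_s(\lambda)=\pm1$ does not depend on $e'$, and both sides vanish if the $s$-core is non-empty. The primitive $k$-th roots of unity sum to $\mu(k)$, so for a faithful linear character $\omega$ of $\langle\tau\rangle$ you get
\[
N_e=\delta_s(\lambda)^m\sum_{e'\mid e}\mu(e/e')\,\psi(\tau^{e'})^m=\delta_s(\lambda)^m\,e\,\bigl\langle \psi^m|_{\langle\tau\rangle},\,\omega\bigr\rangle .
\]
The inner product is a multiplicity, hence $\ge 0$, and $e\mid N_e$ comes for free. For even $m$ the sign is $+1$. For odd $m$ with empty $s$-core, the hypothesis $a_e=\bigl(\delta_s(\lambda)\psi(1)\bigr)^m\ge 0$ forces $\delta_s(\lambda)=1$; with non-empty $s$-core, $N_e=0$. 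This closes the gap and makes your separate divisibility argument unnecessary.
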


We can construct new CSP triples from existing ones via the Cartesian product.

\begin{lemma}[Product CSP] \label{lem:product_csp}
    Let $(X, C, P)$ and $(Y, C, Q)$ be two triples exhibiting the cyclic sieving phenomenon with the same group $C = \langle g \rangle$. Then the triple
    \[
        (X \times Y, C, P\cdot Q)
    \]
    exhibits the CSP, where the group action on $X \times Y$ is defined diagonally by $g \cdot (x,y) = (g \cdot x, g \cdot y)$.
\end{lemma}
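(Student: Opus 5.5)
The plan is to check the CSP definition directly for the diagonal action, computing the fixed points of $g^d$ on $X\times Y$ and comparing with the evaluation of $P\cdot Q$ at $\xi^d$. Here $\xi$ is a primitive root of unity of order $|C|$. The same $\xi$ serves both hypotheses, since both triples are stated with respect to the same group $C$.

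First, check that the diagonal action $g\cdot(x,y) = (g\cdot x, g\cdot y)$ is a well-defined action of $C$ on $X\times Y$; this holds because it is the product of two actions of the same group. By induction, $g^d\cdot(x,y) = (g^d\cdot x, g^d\cdot y)$ for every integer $d\ge 0$. Hence $(x,y)$ is fixed by $g^d$ exactly when $x$ is fixed by $g^d$ and $y$ is fixed by $g^d$. This gives the equality of sets
\[
(X\times Y)^{g^d} = X^{g^d}\times Y^{g^d},
\]
so that $|(X\times Y)^{g^d}| = |X^{g^d}|\cdot|Y^{g^d}|$.

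Next, apply the CSP hypotheses for $(X,C,P)$ and $(Y,C,Q)$, which give $|X^{g^d}| = P(\xi^d)$ and $|Y^{g^d}| = Q(\xi^d)$. Evaluation at $\xi^d$ is a ring homomorphism from $\ZZ[q,q^{-1}]$ to $\mathbb{C}$. It is defined on Laurent polynomials because $\xi^d\neq 0$. Therefore $(P\cdot Q)(\xi^d) = P(\xi^d)\,Q(\xi^d) = |(X\times Y)^{g^d}|$ for all $d\ge0$.

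Finally, confirm that $P\cdot Q$ is an admissible sieving polynomial. It lies in $\NN[q,q^{-1}]$ because products of Laurent polynomials with non-negative integer coefficients again have non-negative integer coefficients. The set $X\times Y$ is finite. This completes the verification. There is no real obstacle; the only point needing care is that both triples use the same group, and hence the same root of unity $\xi$.
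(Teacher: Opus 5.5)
Your proposal is correct and follows the paper's proof essentially verbatim. You identify $(X\times Y)^{g^d} = X^{g^d}\times Y^{g^d}$, take cardinalities, and apply the two CSP hypotheses together with multiplicativity of evaluation at $\xi^d$. Your extra checks, namely that the diagonal action is well defined and that $P\cdot Q$ again has non-negative integer coefficients, are correct details the paper leaves implicit.
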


\begin{proof}
    Let $d \ge 0$. The set of fixed points of $g^d$ on the product is the Cartesian product of the fixed point sets. That is
    \[
    (X \times Y)^{g^d} = \{(x,y) : g^d x = x \text{ and } g^d y = y\} = X^{g^d} \times Y^{g^d}.
    \]
    Taking cardinalities, we have
    \[
    |(X \times Y)^{g^d}| = |X^{g^d}| \cdot |Y^{g^d}|.
    \]
    Since the individual triples exhibit the CSP, $|X^{g^d}| = P(\xi^d)$ and $|Y^{g^d}| = Q(\xi^d)$. Therefore,
    \[
    |(X \times Y)^{g^d}| = P(\xi^d) Q(\xi^d) = (P\cdot Q)(\xi^d).
    \]
    Thus, the product polynomial correctly enumerates the fixed points.
\end{proof}

\subsection{Border Strip Tableaux}
The evaluation of the standard $q$-hook length formula at roots of unity is closely related to the enumeration of border strip tableaux.

\begin{definition}
    A \Dfn{border strip} is a connected skew shape that does not contain a $2 \times 2$ square. A \Dfn{border strip tableau} of shape $\lambda$ and type $\mu = (\mu_1,\dots,\mu_k)$ is a filling of the Young diagram of $\lambda$ with the numbers $1,\dots, k$ such that
    \begin{enumerate}
        \item the entries are weakly increasing in each row from left to right, and
        \item the entries are weakly increasing in each column from top to bottom, and
        \item the cells filled with $i$ form a border strip of size $\mu_i$.
    \end{enumerate}
 We denote by $\BST(\lambda, k)$ the set of border strip tableaux of shape $\lambda$ such that all border strips have size $k$.
\end{definition}

\begin{example}
A border strip tableau of shape $(6,5,4,2,2,2)$ with strips of length $3$ is:
    \[
\begin{tikzpicture}[x=1.2em, y=1.2em,baseline={([yshift=-1ex]current bounding box.center)}]
\draw[line width=0.8] (0,0) -- (6,0) -- ++(0,-1) -- ++(-1,0) -- ++(0,-1) -- ++(-1,0) -- ++(0,-1) -- ++(-2,0) -- ++(0,-1) -- ++(-0,0) -- ++(0,-1) -- ++(-0,0) -- ++(0,-1) -- ++(-2,0) -- cycle;
\draw[line width=0.8] (0,0) -- (4,0) -- ++(0,-1) -- ++(-0,0) -- ++(0,-1) -- ++(-0,0) -- ++(0,-1) -- ++(-2,0) -- ++(0,-1) -- ++(-0,0) -- ++(0,-1) -- ++(-0,0) -- ++(0,-1) -- ++(-2,0);
\draw[line width=0.8] (0,0) -- (4,0) -- ++(0,-1) -- ++(-0,0) -- ++(0,-1) -- ++(-0,0) -- ++(0,-1) -- ++(-2,0) -- ++(0,-1) -- ++(-1,0) -- ++(0,-1) -- ++(-1,0);
\draw[line width=0.8] (0,0) -- (4,0) -- ++(0,-1) -- ++(-0,0) -- ++(0,-1) -- ++(-0,0) -- ++(0,-1) -- ++(-4,0);
\draw[line width=0.8] (0,0) -- (3,0) -- ++(0,-1) -- ++(-0,0) -- ++(0,-1) -- ++(-0,0) -- ++(0,-1) -- ++(-3,0);
\draw[line width=0.8] (0,0) -- (3,0) -- ++(0,-1) -- ++(-0,0) -- ++(0,-1) -- ++(-3,0);
\draw[line width=0.8] (0,0) -- (2,0) -- ++(0,-1) -- ++(-1,0) -- ++(0,-1) -- ++(-1,0);
\draw[line width=0.8] (0,0) -- (0,0);

\draw (4.5,-1.5) node{7};\draw (4.5,-0.5) node{7};\draw (5.5,-0.5) node{7};
\draw (0.5,-5.5) node{6};\draw (1.5,-5.5) node{6};\draw (1.5,-4.5) node{6};
\draw (0.5,-4.5) node{5};\draw (0.5,-3.5) node{5};\draw (1.5,-3.5) node{5};
\draw (3.5,-2.5) node{4};\draw (3.5,-1.5) node{4};\draw (3.5,-0.5) node{4};
\draw (0.5,-2.5) node{3};\draw (1.5,-2.5) node{3};\draw (2.5,-2.5) node{3};
\draw (1.5,-1.5) node{2};\draw (2.5,-1.5) node{2};\draw (2.5,-0.5) node{2};
\draw (0.5,-1.5) node{1};\draw (0.5,-0.5) node{1};\draw (1.5,-0.5) node{1};
\end{tikzpicture}
\]
\end{example}

\begin{theorem}[{\cite[Theorem 2.7.27]{James1984}.}] \label{thm:james kerber}
    Let $\lambda \vdash n$ and let $\xi$ be a primitive $d$-th root of unity. If $d$ does not divide $n$, then $f^\lambda(\xi) = 0$. If $d \mid n$, then
    \begin{equation}\label{eq:MN Rule}
        f^\lambda(\xi) = \epsilon_\lambda |\BST(\lambda, d)|,
    \end{equation}
    where $\epsilon_\lambda \in \{-1, 1\}$ is a sign depending on the shape $\lambda$.
\end{theorem}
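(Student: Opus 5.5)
This is the classical Murnaghan--Nakayama evaluation, so I would prove it through the $q$-hook-length formula \eqref{eq:q hook length formula} rather than combinatorially. The first step is to handle $d \nmid n$. Write
\[
f^\lambda(q)=q^{\kappa(\lambda)}\frac{[n]_q!}{\prod_{\square}[h_\square]_q}=q^{\kappa(\lambda)}\frac{\prod_{i=1}^n(1-q^i)}{\prod_{\square}(1-q^{h_\square})}.
\]
At a primitive $d$-th root of unity $\xi$, the order of vanishing of the numerator is $\lfloor n/d\rfloor$. The order of vanishing of the denominator is the number of cells whose hook length is divisible by $d$, which equals the total $d$-weight $w_d(\lambda)$ of the $d$-core decomposition. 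This is the standard abacus fact: the number of hooks of length divisible by $d$ equals the number of $d$-hooks one can successively remove. Since $n = |\mathrm{core}_d(\lambda)| + d\,w_d(\lambda)$, we have $w_d(\lambda)\le \lfloor n/d\rfloor$, with equality if and only if the core has size less than $d$. Since $f^\lambda$ is a polynomial, the value $f^\lambda(\xi)$ is nonzero only if $w_d(\lambda)=\lfloor n/d\rfloor$ \emph{and} $d\mid n$ with empty core. If $d\nmid n$, then $n/d$ is not an integer, so the orders cannot match up to an empty core. More precisely, $f^\lambda(\xi)\neq 0$ forces the core to be empty, which forces $d\mid n$.

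The second step is the case $d\mid n$. If $\lambda$ has a nonempty $d$-core, the argument above gives vanishing, and $\BST(\lambda,d)=\emptyset$, since removing border strips of size $d$ is exactly removing $d$-hooks. Otherwise, with $w=n/d$, I pair off the zeros. The numerator factors $1-q^{jd}$ for $j=1,\dots,w$ are matched against the cells with $d\mid h_\square$. By the abacus description, the hook lengths divisible by $d$, divided by $d$, form exactly the multiset of hook lengths of the $d$-quotient $(\lambda^{(0)},\dots,\lambda^{(d-1)})$. Using $\lim_{q\to\xi}(1-q^{jd})/(1-q^{h d})=j/h$, together with the fact that the remaining factors evaluate to $\prod_{d\nmid i\le n}(1-\xi^i)/\prod_{d\nmid h_\square}(1-\xi^{h_\square})$, I would show that this ratio is a root of unity. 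Combined with real-ness (up to the $q^{\kappa}$ factor) and integrality, it is $\pm1$. Hence
\[
f^\lambda(\xi)=\pm\frac{w!}{\prod_{\square\in\text{quotient}}h_\square}=\pm\binom{w}{|\lambda^{(0)}|,\dots,|\lambda^{(d-1)}|}\prod_j f^{\lambda^{(j)}}.
\]

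The final step identifies this count with $|\BST(\lambda,d)|$. Under the abacus bijection, removing a $d$-border strip from $\lambda$ corresponds to removing a corner cell from one quotient component. So a border strip tableau of type $(d,\dots,d)$ is exactly a shuffle of standard Young tableaux on the quotient shapes, and there are $\binom{w}{|\lambda^{(0)}|,\dots}\prod_j f^{\lambda^{(j)}}$ of them.

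The main obstacle is the claim that the non-vanishing leftover factor has modulus exactly $1$. For this I would show that the multiset of residues mod $d$ of $\{h_\square : d\nmid h_\square\}$ agrees with $\{1,\dots,n\}\setminus d\mathbb{Z}$ up to sign pairing $r\leftrightarrow -r$. This follows from the abacus: each runner contributes balanced residues when the core is empty. Alternatively, the whole statement can be cited as the Murnaghan--Nakayama rule applied to $f^\lambda(\xi)$ viewed as a character value of $\chi^\lambda$ on a product of $n/d$ $d$-cycles (via the fake-degree/Springer interpretation), which is how \cite{James1984} proceeds. I would fall back on that if the residue bookkeeping becomes messy.
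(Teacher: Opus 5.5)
Your Step 1 cannot succeed, because the clause it is meant to prove is false as stated. Your order-of-vanishing count is correct: the numerator vanishes to order $\lfloor n/d\rfloor$ and the denominator to order $w_d(\lambda)$. So it shows exactly that $f^\lambda(\xi)\neq 0$ if and only if $|\mathrm{core}_d(\lambda)|<d$, i.e.\ $|\mathrm{core}_d(\lambda)|=n \bmod d$. The error is the next inference, ``$f^\lambda(\xi)\neq0$ forces the core to be empty.'' Every partition with fewer than $d$ cells has all hook lengths $<d$, so it is automatically a $d$-core, and a nonempty core of size $n \bmod d$ is entirely possible. Concretely, $f^{(n)}(q)=1$, so $\lambda=(1)$ or $\lambda=(3)$ with $d=2$ gives $f^\lambda(-1)=1\neq 0$ although $2\nmid n$. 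The correct statement for $d\nmid n$ is that $f^\lambda(\xi)$ vanishes unless $|\mathrm{core}_d(\lambda)|=n \bmod d$. The paper gives no proof here, only the citation, and it only ever applies the result at $\zeta=\xi^d$, whose order $s=n/d$ divides $n$, so its theorems are unaffected. Still, no argument can establish the first sentence of the statement as written.

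For $d\mid n$ your structure is sound: zero-matching via quotient hook lengths, then $w!/\prod h'=\binom{w}{|\lambda^{(0)}|,\dots}\prod_j f^{\lambda^{(j)}}=|\BST(\lambda,d)|$. The step that pins the leftover unit $\xi^{\kappa(\lambda)}R$ to $\pm1$ is not. Even granting your unproved residue-pairing claim, you only get a root of unity, and neither closing argument finishes the job. Integrality does not help, since an algebraic integer can be a root of unity times a positive rational (e.g.\ $\xi$ itself). The realness you invoke is also not available in the form stated: $\xi^{-\kappa(\lambda)}f^\lambda(\xi)$ need not be real (for $\lambda=(2,1)$, $d=3$ it equals $1+\xi=-\xi^2$), and realness of $f^\lambda(\xi)$ itself is exactly what must be proved. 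A clean fix that also removes the abacus bookkeeping is to write $f^\lambda(q)=\prod_{i=1}^n(1-q^i)\,s_\lambda(1,q,q^2,\dots)$ and expand $s_\lambda=\sum_\mu z_\mu^{-1}\chi^\lambda(\mu)p_\mu$. Each summand is a polynomial, and at $q=\xi$ only $\mu=(d^{w})$ with $w=n/d$ survives. Since $z_{(d^w)}=d^w w!$ and $\prod_{i\le n,\ d\nmid i}(1-\xi^i)=d^w$, you get exactly $f^\lambda(\xi)=\chi^\lambda\bigl((d^{w})\bigr)$. Murnaghan--Nakayama then gives a signed count of $\BST(\lambda,d)$. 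You must still invoke the standard (nontrivial) fact that all $d$-rim-hook tableaux of $\lambda$ have the same height parity to obtain $\epsilon_\lambda|\BST(\lambda,d)|$. This is essentially your fallback, and it is the route you should take.
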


The crucial step in our proof of the main theorems relies on the behavior of cell contents for shapes $\lambda$ with $\BST(\lambda, m) \neq \emptyset$.

\begin{lemma} \label{lem:content_dist}
    Let $\lambda$ be a partition of $n$. Let $s$ be a divisor of $n$ such that $\BST(\lambda, s) \neq \emptyset$. Then the multiset of contents $\{\!\{c(\square) \pmod{s} : \square \in \lambda\}\!\}$ consists of the values $\{0, 1, \dots, s-1\}$, each appearing with multiplicity $n/s$.
\end{lemma}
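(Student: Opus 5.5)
Since $\BST(\lambda,s)\neq\emptyset$, fix a border strip tableau $B\in\BST(\lambda,s)$. It decomposes the Young diagram of $\lambda$ into $n/s$ border strips, each of size $s$. The plan is to show that each single border strip of size $s$ has contents that form a complete residue system modulo $s$. The multiset statement then follows immediately: the multiset of contents of $\lambda$ is the disjoint union over the $n/s$ strips, and each strip contributes every residue $0,1,\dots,s-1$ exactly once.

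For the key step, let $R$ be a border strip with $|R|=s$. We show that the contents of the cells of $R$ are $s$ \emph{consecutive} integers, all distinct. Because $R$ is a connected skew shape with no $2\times 2$ square, its cells can be listed as a path $\square_1,\dots,\square_s$ running from the bottom-left cell to the top-right cell. Each consecutive step in this path goes either one cell to the right or one cell up, and either kind of step increases the content $c=\col-\row$ by exactly $1$. Hence $c(\square_{j})=c(\square_1)+j-1$, so the contents are $c_0,c_0+1,\dots,c_0+s-1$. These are $s$ consecutive integers, and so they hit each residue class modulo $s$ exactly once.

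The main point needing care is the path description of a border strip. We need that a connected skew shape without a $2\times2$ square contains at most one cell on each diagonal of constant content, and that cells on adjacent diagonals are edge-adjacent in the stated way. One clean argument runs as follows. Suppose two cells of the skew shape $\lambda^{(i)}/\lambda^{(i-1)}$ lie on the same diagonal, at positions $(r,c)$ and $(r+t,c+t)$ with $t\ge1$. Convexity of skew shapes then forces the cells $(r+1,c),(r,c+1),(r+1,c+1)$ into the shape when $t=1$; for larger $t$ it forces a $2\times2$ block in the same way. This gives a contradiction. Connectivity then implies that the set of contents occupied by $R$ is an interval, with one cell per content value, which gives the consecutive-contents claim. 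Summing over the $n/s$ strips of $B$ completes the proof.
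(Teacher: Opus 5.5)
Your proof is correct and follows the same route as the paper. You fix a border strip tableau, observe that each of its $n/s$ strips contributes every residue mod $s$ exactly once, and sum over the strips. You also supply the justification the paper only asserts: no $2\times 2$ block together with skew convexity gives at most one cell per diagonal, and connectivity then makes the occupied contents an interval, i.e.\ $s$ consecutive integers.
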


\begin{proof}
    This follows from the structural properties of border strip decompositions. If a shape can be tiled by border strips of length $s$, the contents of the cells in any single border strip are distinct modulo $s$ and cover the full set of residues $\{0, \dots, s-1\}$ exactly once. Since there are $n/s$ strips in the decomposition, the total collection of contents covers each residue $n/s$ times.
\end{proof}

\section{Signed Standard Tableaux and Super Major Index} \label{sec:super_tableaux}

Let $\lambda \vdash n$ be a partition. A \Dfn{signed standard tableau} of shape $\lambda$ is a pair $\mathcal{T} = (\mathcal{T}^+, D)$, where:
\begin{itemize}
    \item $\mathcal{T}^+ \in \SYT(\lambda)$ is a standard Young tableau of shape $\lambda$, and
    \item $D \subseteq \{1, 2, \dots, n\}$ is a subset of entries, which we refer to as the \Dfn{negative entries} of $\mathcal{T}$, denoted by $\NEG(\mathcal{T})$.
\end{itemize}
The set of all signed standard tableaux of shape $\lambda$ is denoted by $\SYT_{\pm}(\lambda)$. The subset of tableaux with exactly $k$ negative entries is denoted by $\SYT_{\pm k}(\lambda)$.

We define a cyclic action $\pr$ on $\SYT_{\pm}(\lambda)$ by acting on the components:
\[
\pr(\mathcal{T}) \coloneqq (\pr(\mathcal{T}^+), \cyc(D)).
\]

\begin{definition}[{\cite{ArmonSwanson2025}}]
    An integer $i \in \{1, \dots, n-1\}$ is a \Dfn{super descent} of $\mathcal{T} \in \SYT_{\pm}(\lambda)$ if either:
    \begin{enumerate}
        \item $i \in \DES(\mathcal{T}^+)$ and $i+1 \notin \NEG(\mathcal{T})$, or
        \item $i \notin \DES(\mathcal{T}^+)$ and $i \in \NEG(\mathcal{T})$.
    \end{enumerate}
    The \Dfn{super major index} of $\mathcal{T}$, denoted $\maj(\mathcal{T})$, is the sum of its super descents.

    We denote with \[f^\lambda_{\pm k}(q) = \sum_{\mathcal{T} \in \SYT_{\pm k}(\lambda)} q^{\maj(\mathcal{T})}\] the generating function for the super major index over signed tableaux with $k$ negative entries and with \[f^\lambda(q,t) \coloneqq \sum_k f^\lambda_{\pm k}(q) t^k = \sum_{\mathcal{T}} q^{\maj(\mathcal{T})} t^{|\NEG(\mathcal{T})|}\] the bivariate generating function summing over all signed standard tableaux that also tracks the number of negative entries with $t$.
\end{definition}

\begin{example}
    Consider the tableau from the previous \Cref{ex:maj}.
    \[
    \mathcal{T}^+ = \ytableaushort{124,35,67}, \qquad \NEG(\mathcal{T}) = \{3, 6\}.
    \]
    We may visualize this by marking negative entries (e.g., with an overline):
    \[
    \mathcal{T} = \ytableaushort{124,{\overline{3}}5,{\overline{6}}7}
    \]
    The descents of $\mathcal{T}^+$ are $\DES(\mathcal{T}^+) = \{2, 4, 5\}$.
    Let us compute the super descents:
    \begin{itemize}
        \item $i=1$: $1 \notin \DES$, $1 \notin \NEG$. No.
        \item $i=2$: $2 \in \DES$, $3 \in \NEG$. No.
        \item $i=3$: $3 \notin \DES$, $3 \in \NEG$. (Condition 2 holds). \textbf{Yes}.
        \item $i=4$: $4 \in \DES$, $5 \notin \NEG$. (Condition 1 holds). \textbf{Yes}.
        \item $i=5$: $5 \in \DES$, $6 \in \NEG$. No.
        \item $i=6$: $6 \notin \DES$, $6 \in \NEG$. (Condition 2 holds). \textbf{Yes}.
    \end{itemize}
    Thus, the super descent set is $\{3, 4, 6\}$ and $\maj(\mathcal{T}) = 13$.
\end{example}

Before analyzing the super major index, we observe that \Cref{lem:product_csp} immediately provides a cyclic sieving phenomenon for super tableaux, viewing them simply as the Cartesian product $\SYT(\lambda) \times \binom{[n]}{k}$.

\begin{proposition}[Trivial Super CSP]\label{prop:trivial CSP}
    Let $\lambda \vdash n$ be a rectangular partition. The triple
    \[
    \left( \SYT_{\pm k}(\lambda), \langle \pr \times \cyc \rangle, q^{-\kappa(\lambda)} f^\lambda(q) \cdot \qbinom{n}{k} \right)
    \]
    exhibits the cyclic sieving phenomenon.
\end{proposition}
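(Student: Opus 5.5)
The statement follows directly from the product construction of \Cref{lem:product_csp}, so the plan is to identify the two factors and check that the group actions match.

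First, recall that a signed standard tableau with $k$ negative entries is exactly a pair $(\mathcal{T}^+, D)$ with $\mathcal{T}^+ \in \SYT(\lambda)$ and $D \in \binom{[n]}{k}$. This gives a bijection
\[
\SYT_{\pm k}(\lambda) \cong \SYT(\lambda) \times \binom{[n]}{k}.
\]
Under this identification, the action $\pr(\mathcal{T}) = (\pr(\mathcal{T}^+), \cyc(D))$ is precisely the diagonal action of the generator on the product. Next, both factors carry a CSP with respect to a cyclic group of order $n$:
\begin{itemize}
\item For $\SYT(\lambda)$ with $\lambda$ rectangular, \Cref{thm:Rhoades} gives the CSP triple $(\SYT(\lambda), \langle \pr \rangle, q^{-\kappa(\lambda)} f^\lambda(q))$. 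Here promotion has order $n = ab$, as recalled in the background section.
\item For $\binom{[n]}{k}$, \Cref{thm:subsets} gives the CSP triple $(\binom{[n]}{k}, \langle \cyc \rangle, \qbinom{n}{k})$, and $\cyc$ has order dividing $n$.
\end{itemize}

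The one point needing care is that \Cref{lem:product_csp} requires the same group $C$ acting on both sides. I would take $C = \ZZ/n\ZZ$ with generator $g$, letting $g$ act by $\pr$ on the first factor and by $\cyc$ on the second. This is well defined because $\pr^n = \mathrm{id}$ on rectangular $\SYT(\lambda)$ and $\cyc^n = \mathrm{id}$. In both CSP statements the root of unity $\xi$ is a primitive $n$-th root, so the evaluations $P(\xi^d)$ refer to the same $\xi$ in each factor.

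Then \Cref{lem:product_csp} yields that the product triple, with polynomial $q^{-\kappa(\lambda)} f^\lambda(q)\qbinom{n}{k}$, exhibits the CSP under the diagonal action, which is $\langle \pr \times \cyc \rangle$. There is no real obstacle; the only subtlety is this matching of group orders and of roots of unity, which is settled by the fact that promotion on an $a\times b$ rectangle has order exactly $ab = n$.
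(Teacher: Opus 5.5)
Your proposal is correct and matches the paper's argument. The paper likewise gets this proposition directly from \Cref{lem:product_csp}: it identifies $\SYT_{\pm k}(\lambda)$ with $\SYT(\lambda)\times\binom{[n]}{k}$ and combines \Cref{thm:Rhoades} with \Cref{thm:subsets}. Your explicit check that one cyclic group of order $n$ acts on both factors, where only $\pr^n=\mathrm{id}$ is needed rather than order exactly $n$, is a detail the paper leaves implicit.
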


\begin{remark}
    While valid, this polynomial is distinct from the one in \Cref{thm:mainA}.

    For instance consider the partition $\lambda = (3,3)$ and $k=1$.
     The generating function for the classical major index is 
    \[
        f^{(3,3)}(q) = q^3 + q^5 + q^6 + q^7 + q^9.
    \]
    The polynomial for the ``Trivial Super CSP'' given by \Cref{prop:trivial CSP} is:
    \begin{align*}
        P_{\text{trivial}}(q) &= q^{-3} f^{(3,3)}(q) [6]_q \\
        &= 1 + q + 2q^2 + 3q^3 + 4q^4 + 4q^5 + 4q^6 + 4q^7 + 3q^8 + 2q^9 + q^{10} + q^{11}.
    \end{align*}

    For this example the super major index generating function is:
    \begin{align*}
        f^{(3,3)}_{\pm 1}(q) = q^2 + 2q^3 + 3q^4 + 4q^5 + 5q^6 + 5q^7 + 4q^8 + 3q^9 + 2q^{10} + q^{11}.
    \end{align*}
    Multiplying by the twist $q^{\gamma(6,1) - \kappa(3,3)} =  q^{-3}$ yields our main sieving polynomial:
    \[
        P_{\text{super}}(q) = q^{-1} + 2 + 3q + 4q^2 + 5q^3 + 5q^4 + 4q^5 + 3q^6 + 2q^7 + q^8.
    \]

\end{remark}

Armon and Swanson proved that the super major index generating function has a product formula that refines the $q$-hook length formula by incorporating the contents of the cells.
\begin{theorem}[{\cite[Theorem 1.6]{ArmonSwanson2025}}] \label{thm:super_major_index_hook_lenght}
    Let $\lambda$ be a partition of $n$. The super major index generating function is given by
    \begin{equation}  \label{eq:qt hook length formula}
    f^\lambda(q,t) = [n]_q! \prod_{\square \in \lambda} \frac{q^{\row(\square)-1} + t q^{\col(\square)-1}}{[h_{\square}]_q}.
    \end{equation}

\end{theorem}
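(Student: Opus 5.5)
The plan is to derive the product formula from the plethystic principal specialization of the super-Schur function $s_\lambda(x/y)$, in three steps: a quasisymmetric expansion, a super $P$-partition computation, and the $q$-hook-content formula. The target right-hand side has a natural reading in these terms. Write $n(\lambda)=\kappa(\lambda)=\sum_{\square}(\row(\square)-1)$. Then
\[
\prod_{\square\in\lambda}\bigl(q^{\row(\square)-1}+tq^{\col(\square)-1}\bigr)=q^{\kappa(\lambda)}\prod_{\square\in\lambda}\bigl(1+tq^{c(\square)}\bigr).
\]
The claimed identity is therefore equivalent to
\[
\frac{f^\lambda(q,t)}{(1-q)(1-q^2)\cdots(1-q^n)} \;=\; q^{\kappa(\lambda)}\prod_{\square\in\lambda}\frac{1+tq^{c(\square)}}{1-q^{h_\square}}.
\]
The right-hand side is the classical hook-content evaluation of $s_\lambda[(1-u)/(1-q)]$ at $u=-t$. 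Equivalently, it is the super-Schur function $s_\lambda(x/y)$ specialized at $x_i=q^{i-1}$ and $y_j=tq^{j-1}$. I take this evaluation as known, for instance from Stanley's hook-content formula applied to $s_\lambda[X-uX']$ / Macdonald.

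Step one is the supersymmetric analogue of Gessel's expansion $s_\lambda=\sum_{T\in\SYT(\lambda)}F_{\DES(T)}$. Superization is a ring map, so it gives
\[
s_\lambda(x/y)=\sum_{T\in\SYT(\lambda)}\widetilde F_{\DES(T)}(x/y),
\]
where $\widetilde F_S(x/y)$ is the super fundamental quasisymmetric function. Concretely, $\widetilde F_S(x/y)$ is the generating function of words $w_1\le\dots\le w_n$ in the ordered alphabet $x_1<y_1<x_2<y_2<\cdots$, subject to the following strictness rules:
\begin{itemize}
\item an unbarred letter may repeat at position $i$ only if $i\notin S$;
\item a barred letter may repeat at position $i$ only if $i\in S$.
\end{itemize}
This is the standard super-$P$-partition description.

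Step two is the main computation: specialize $\widetilde F_S$ and organize the words by the set $D$ of positions carrying barred letters. Fix $D$ and use values $x_i\mapsto q^{i-1}$, $y_j\mapsto tq^{j-1}$. Each word then becomes a weakly increasing sequence of exponents $a_1\le\dots\le a_n$, and I must determine which steps are forced to be strict. I expect the forced-strict steps $i\to i+1$ to be exactly the super descents of $(T,D)$. Condition (1) covers $i\in S$ with $i+1$ unbarred. Condition (2) covers $i\notin S$ with $i$ barred, where moving from $y_j$ to anything unbarred forces an increase of exponent. This yields
\[
\sum_D t^{|D|}\frac{q^{\maj(T,D)}}{(1-q)\cdots(1-q^n)},
\]
by the usual bijection subtracting the staircase $\sum_{i\in\text{strict}}$.

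The main obstacle is this exponent bookkeeping. The interleaved order $x_i<y_i<x_{i+1}$ must be chosen (or possibly $y_i<x_{i+1}$ with a shift) so that the strictness pattern matches Armon–Swanson's definition exactly. This includes the edge cases $i\in S$ with $i+1\in D$, and $i\notin S$ with $i\in D$ and $i+1\in D$. I would first check these cases on small examples, such as $\lambda=(3,3)$ with $k=1$, against the remark above, and adjust the alphabet order accordingly. Step three is then automatic: summing over $T$ and comparing with the hook-content evaluation gives the stated formula after multiplying by $[n]_q!(1-q)^n$.
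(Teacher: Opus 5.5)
The paper gives no proof of this statement (it is quoted from Armon--Swanson), so your argument has to stand on its own. Its frame is right. The reduction to $q^{\kappa(\lambda)}\prod_{\square}(1+tq^{c(\square)})/(1-q^{h_\square})$ is correct, and this is $s_\lambda[(1-u)/(1-q)]$ at $u=-t$, because your specialization sends $p_k\mapsto(1-(-t)^k)/(1-q^k)$. But Step two contains a real error, and it is not the one you anticipated.

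With your order $x_1<y_1<x_2<y_2<\cdots$ the strictness pattern already matches Armon--Swanson exactly. Two unbarred letters force $a_{i+1}>a_i$ iff $i\in S$, and two barred letters iff $i\notin S$. Unbarred-then-barred is never forced strict and barred-then-unbarred always is, because $x_a<y_a<x_{a+1}$. The problem is the direction of summation. For $0\le b_1\le\cdots\le b_n$ with strict steps at a set $E$, the staircase substitution $b_i\mapsto b_i-\#\{j\in E:j<i\}$ gives $q^{\sum_{j\in E}(n-j)}/((1-q)\cdots(1-q^n))$. That is the \emph{comajor} index of the super descent set, not $\maj(T,D)$. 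For instance, for $\lambda=(3)$ and $D=\{1\}$ you get $b_2\ge b_1+1$ and $b_3\ge b_2$, so the leading term is $tq^2$, while $\maj(T,D)=1$.

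Classically one passes from comaj to maj by evacuation, but that does not transfer here. Already for $\lambda=(2,1)$, the comajor generating function (over $D$) of one tableau differs from the major generating function of its evacuation. So there is no tableau-by-tableau repair, and an equidistribution argument would have to mix tableaux.

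The fix is to run the words the other way. The super-Gessel expansion holds for \emph{any} total order on the signed alphabet. This is the Haglund--Haiman--Loehr superization lemma, and it is also the correct justification for Step one; ``superization is a ring map'' on $\mathrm{Sym}$ says nothing by itself about the $\widetilde F_S$. Take the order $\cdots<x_2<y_2<x_1<y_1$. The indices are then weakly decreasing, and the same four-case check (now using $x_a<y_a<x_{a-1}$) shows the strict steps are again exactly the super descents. The substitution $b_i\mapsto b_i-\#\{j\in E:j\ge i\}$ then yields $q^{\maj(T,D)}/((1-q)\cdots(1-q^n))$.

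Equivalently, keep your order on $x_1,y_1,\dots,x_N,y_N$, specialize $x_i=q^{N-i}$ and $y_i=tq^{N-i}$, use the separate symmetry of $s_\lambda(x/y)$ in $x$ and in $y$, and let $N\to\infty$. With either change your argument becomes a complete proof.
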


From this we obtain that the generating function $f^\lambda(q,t) \coloneqq \sum_{\mathcal{T}} q^{\maj(\mathcal{T})} t^{|\NEG(\mathcal{T})|}$ factors in a way crucial for our analysis.

\begin{corollary} \label{cor:content_formula}
    The super major index generating function factors as:
    \begin{equation}\label{eq:factorisation}
    f^\lambda(q,t) = f^\lambda(q) \prod_{\square \in \lambda} (1 + t q^{c(\square)}),
    \end{equation}
    where $c(\square)$ is the content of the cell $\square$.
\end{corollary}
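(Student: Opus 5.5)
We derive \Cref{cor:content_formula} directly from the product formula of \Cref{thm:super_major_index_hook_lenght} by splitting each factor into a hook-length part and a content part. We then compare the hook-length part with the $q$-hook-length formula \eqref{eq:q hook length formula}.

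For each cell $\square$ we factor out $q^{\row(\square)-1}$ from the numerator:
\[
q^{\row(\square)-1} + t q^{\col(\square)-1} = q^{\row(\square)-1}\bigl(1 + t q^{\col(\square)-\row(\square)}\bigr) = q^{\row(\square)-1}\bigl(1 + t q^{c(\square)}\bigr),
\]
by the definition $c(\square)=\col(\square)-\row(\square)$. Substituting into \eqref{eq:qt hook length formula} and regrouping the product gives
\[
f^\lambda(q,t) = \left( \prod_{\square\in\lambda} q^{\row(\square)-1}\right) \frac{[n]_q!}{\prod_{\square\in\lambda}[h_\square]_q} \prod_{\square\in\lambda}\bigl(1+tq^{c(\square)}\bigr).
\]

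Next we identify the prefactor with $q^{\kappa(\lambda)}$. Row $i$ contains $\lambda_i$ cells, each contributing $q^{i-1}$, so
\[
\prod_{\square\in\lambda} q^{\row(\square)-1} = q^{\sum_{i\ge1}(i-1)\lambda_i} = q^{\kappa(\lambda)}.
\]
By \eqref{eq:q hook length formula}, the first two factors together equal $f^\lambda(q)$, which yields \eqref{eq:factorisation}.

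As a consistency check, setting $t=0$ recovers $f^\lambda(q,0)=f^\lambda(q)$. This agrees with the definition, since tableaux with no negative entries have super descents equal to ordinary descents.

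There is no real obstacle in this argument. The only point requiring care is the bookkeeping that the row exponents sum exactly to $\kappa(\lambda)$. Note that contents may be negative, so the product over cells lives in $\ZZ[q,q^{-1},t]$ rather than in $\ZZ[q,t]$. The left-hand side is nonetheless a genuine polynomial, because the identity holds as rational functions.
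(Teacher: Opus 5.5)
Your proposal is correct and follows the paper's proof essentially verbatim. You factor $q^{\row(\square)-1}$ out of each numerator in the Armon--Swanson product, identify $\sum_{\square}(\row(\square)-1)=\kappa(\lambda)$, and absorb the prefactor into $f^\lambda(q)$ via the $q$-hook-length formula; one small wording slip is that the left-hand side is a polynomial by definition, and it is the right-hand side (a priori a Laurent polynomial) whose polynomiality follows from the identity.
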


\begin{proof}
    We factor out $q^{\row(\square)-1}$ from the numerator of each term in the product in~\eqref{eq:qt hook length formula}:
    \begin{align*}
        q^{\row(\square)-1} + t q^{\col(\square)-1} &= q^{\row(\square)-1} (1 + t q^{\col(\square) - \row(\square)}) \\
        &= q^{\row(\square)-1} (1 + t q^{c(\square)}),
    \end{align*}
    where $c(\square) = \col(\square) - \row(\square)$ is the content of the cell $\square$.
    Substituting this back into the expression for $f^\lambda(q,t)$, we obtain:
    \begin{align*}
        f^\lambda(q,t) &= [n]_q! \prod_{\square \in \lambda} \frac{q^{\row(\square)-1} (1 + t q^{c(\square)})}{[h_{\square}]_q} \\
        &= \left( [n]_q! \frac{\prod_{\square \in \lambda} q^{\row(\square)-1}}{\prod_{\square \in \lambda} [h_{\square}]_q} \right) \prod_{\square \in \lambda} (1 + t q^{c(\square)}).
    \end{align*}
    The first factor can be simplified by observing that $\sum_{\square \in \lambda}(\row(\square)-1) = \sum_{i \ge 1} \lambda_i (i-1) = \kappa(\lambda)$. Thus,
    \[
        [n]_q! \frac{q^{\sum_{\square \in \lambda}(\row(\square)-1)}}{\prod_{\square \in \lambda} [h_{\square}]_q} = q^{\kappa(\lambda)} \frac{[n]_q!}{\prod_{\square \in \lambda} [h_{\square}]_q} = f^\lambda(q),
    \]
    by the $q$-Hook-Length Formula~\eqref{eq:q hook length formula}.
\end{proof}

\section{Proof of the Main Theorems} \label{sec:proof}

In this section, we first unify \Cref{thm:mainA,thm:mainB} and then proceed to prove the unified statement. The key idea is to leverage the product formula for the super major index generating function and analyze its evaluation at roots of unity in relation to border strip tableaux.

\begin{theorem}\label{thm:unified}
    Let $\lambda$ be a partition of $n$, $m \ge 1$, and $\alpha \in \ZZ$, such there exists a cyclic group action of order $n$ generated by $g$ on $\SYT(\lambda)^m$ such that the triple
    \[
    (\SYT(\lambda)^m, \langle g \rangle, q^{\alpha}(f^\lambda(q))^m)
    \]
    exhibits the cyclic sieving phenomenon.
    
    Then the triple
    \[
    (\SYT_{\pm k}(\lambda)^m, \langle g \times \cyc^m \rangle, q^{\alpha}(q^{\gamma(n,k)} f^\lambda_{\pm k}(q))^m)
    \]
    also exhibits the cyclic sieving phenomenon.
\end{theorem}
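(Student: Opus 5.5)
The plan is to compare fixed-point counts with polynomial evaluations, using the product structure on both sides. On the combinatorial side, $\SYT_{\pm k}(\lambda)^m$ is in bijection with $\SYT(\lambda)^m \times \binom{[n]}{k}^m$, and the action $g\times\cyc^m$ respects this decomposition. So \Cref{lem:product_csp}, together with the hypothesis and \Cref{thm:subsets}, shows that the number of fixed points of $(g\times\cyc^m)^d$ equals $\xi^{d\alpha}(f^\lambda(\xi^d))^m \cdot \qbinom[q=\xi^d]{n}{k}^m$. It therefore suffices to prove, for every $d$,
\[
\xi^{d\alpha}\bigl(\xi^{d\gamma(n,k)} f^\lambda_{\pm k}(\xi^d)\bigr)^m = \xi^{d\alpha}(f^\lambda(\xi^d))^m \qbinom[q=\xi^d]{n}{k}^m .
\]
Let $\omega=\xi^d$, a primitive $s$-th root of unity with $s=n/\gcd(n,d)$. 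By \Cref{cor:content_formula}, $f^\lambda_{\pm k}(q)=f^\lambda(q)\,e_k(q^{c(\square)}:\square\in\lambda)$. It is therefore enough to show that, whenever $f^\lambda(\omega)\neq 0$,
\[
\omega^{\gamma(n,k)} e_k\bigl(\omega^{c(\square)}\bigr)=\qbinom[q=\omega]{n}{k}.
\]
If $f^\lambda(\omega)=0$, both sides vanish and there is nothing to prove.

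\emph{Step 1 (the case $f^\lambda(\omega)\neq0$).} By \Cref{thm:james kerber}, $f^\lambda(\omega)\ne0$ forces $\BST(\lambda,s)\neq\emptyset$. Then \Cref{lem:content_dist} says the multiset $\{\omega^{c(\square)}\}$ consists of each $s$-th root of unity with multiplicity $n/s$. This is the same multiset as $\{\omega^{j}:0\le j\le n-1\}$. Consequently
\[
\prod_{\square}(1+t\omega^{c(\square)})=\prod_{j=0}^{n-1}(1+t\omega^j)=\sum_k \omega^{\binom k2}\qbinom[q=\omega]{n}{k}t^k,
\]
where the last equality is the $q$-binomial theorem evaluated at $q=\omega$. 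Comparing coefficients of $t^k$ gives $e_k(\omega^{c(\square)})=\omega^{\binom k2}\qbinom[q=\omega]{n}{k}$.

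\emph{Step 2 (the twist).} It remains to check that $\omega^{\gamma(n,k)}\omega^{\binom k2}=\omega^{n\binom k2}=1$, using $\gamma(n,k)=(n-1)\binom k2$ and $\omega^n=1$. This holds, so the twist $q^{\gamma(n,k)}$ exactly cancels the $q^{\binom k2}$ coming from the $q$-binomial theorem. With this, both sides agree for every $d$, and the theorem follows. The choice to work with each $t^k$-coefficient separately, rather than the full generating function, is what lets the per-$k$ twist $\gamma(n,k)$ appear cleanly.

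\emph{Main obstacle.} The delicate point is the reduction in Step 1. It relies on the fact that nonvanishing of $f^\lambda(\omega)$ forces the content multiset to be equidistributed modulo $s$. This requires \Cref{thm:james kerber} to be applied with $\omega$ primitive of order $s$ where $s\mid n$; the case $s\nmid n$ cannot occur, since $\omega=\xi^d$ with $\xi$ a primitive $n$-th root always has order dividing $n$. \Cref{lem:content_dist} is then used with this $s$. The trivial case $\omega=1$ ($s=1$) is consistent, since both sides reduce to $\binom nk$. I would double-check that the exponent bookkeeping with the Laurent shift $\alpha$ carries through unchanged; it does, since $\alpha$ multiplies both sides identically.
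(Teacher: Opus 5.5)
Your proof is correct. Its skeleton matches the paper's: you factor $f^\lambda_{\pm k}$ via \Cref{cor:content_formula}, dispose of the case $f^\lambda(\omega)=0$ via the James--Kerber border-strip evaluation, use \Cref{lem:content_dist} otherwise, and compare with the fixed-point count obtained from the hypothesised CSP, \Cref{thm:subsets} and \Cref{lem:product_csp}. The difference is in how you prove the final root-of-unity identity.

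The paper evaluates both sides explicitly. It uses \eqref{eq:q_binom_eval} to get $\qbinom[q=\zeta]{n}{k}\in\{\binom{d}{k/s},0\}$, and the cyclotomic identity \eqref{eq:cyclotomic_identity} to get $\prod_\square(1+t\zeta^{c(\square)})=(1-(-t)^s)^d$. It then needs a parity case split on $s$ to match the sign $(-1)^{k/s+k}$ against $\zeta^{\gamma(n,k)}=\zeta^{-\binom k2}$.

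You instead note that equidistribution makes the content multiset agree modulo $s$ with that of the one-row shape, $\{0,1,\dots,n-1\}$. You then apply the $q$-binomial theorem $\prod_{j=0}^{n-1}(1+tq^j)=\sum_k q^{\binom k2}\qbinom{n}{k}t^k$ as a polynomial identity before specialising at $q=\omega$. The twist then cancels through $\omega^{n\binom k2}=1$, with no explicit evaluation and no sign analysis. This also explains conceptually why $\gamma(n,k)=(n-1)\binom k2$ is the right twist: it is $-\binom k2$ modulo $n$.

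What the paper's route buys, and yours does not, is the explicit fixed-point count $\zeta^{\alpha}\bigl(f^\lambda(\zeta)\binom{d}{k/s}\bigr)^m$ when $s\mid k$ (and $0$ otherwise). The theorem itself does not need this.
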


    This theorem serves as a general lifting theorem from which our main theorems follow as special cases.

\begin{proof}[Proof of \Cref{thm:mainA,thm:mainB}]
    \noindent \textbf{Rectangular Case.}
    For a rectangular partition $\lambda$, \Cref{thm:Rhoades} provides a CSP for $(\SYT(\lambda), \langle \pr \rangle, q^{-\kappa(\lambda)}f^\lambda(q))$. Applying \Cref{thm:unified} with $m=1$, $g=\pr$, and $\alpha = -\kappa(\lambda)$ yields the CSP for signed tableaux as stated in \Cref{thm:mainA}.
    
    \noindent \textbf{Non-Rectangular Case.}
    For an arbitrary partition $\lambda$, \Cref{thm:APRU} guarantees a CSP for $(\SYT(\lambda)^m, C_n, (f^\lambda(q))^m)$ whenever the evaluations are non-negative. Applying \Cref{thm:unified} with this base case (setting $\alpha=0$) directly implies \Cref{thm:mainB}.
\end{proof}

Before proving \Cref{thm:unified}, we start by evaluating the product term in \eqref{eq:factorisation} at a root of unity.

\begin{proposition} \label{prop:cyclotomic_identity}
    Let $\zeta$ be a primitive $s$-th root of unity. Then
    \begin{equation}\label{eq:cyclotomic_identity}
        \prod_{j=0}^{s-1} (1 + t \zeta^j) = 1 - (-t)^s.
    \end{equation}
\end{proposition}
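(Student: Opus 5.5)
The plan is to start from the factorization $x^s - 1 = \prod_{j=0}^{s-1}(x - \zeta^j)$. This holds because the powers $\zeta^0,\zeta^1,\dots,\zeta^{s-1}$ are exactly the $s$ distinct roots of $x^s-1$, which uses that $\zeta$ is primitive. Both sides of \eqref{eq:cyclotomic_identity} are polynomials in $t$ of degree $s$, so it suffices to prove the identity as polynomials. I would treat $t$ as a formal variable, or equivalently check the identity for all nonzero complex $t$ and extend by continuity or polynomial identity.

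The key step is the substitution $x = -1/t$ for $t \neq 0$. Then
\[
\prod_{j=0}^{s-1}(1 + t\zeta^j) = \prod_{j=0}^{s-1} (-t)\left(-\tfrac{1}{t} - \zeta^j\right) = (-t)^s \prod_{j=0}^{s-1}\left(x - \zeta^j\right) = (-t)^s\,(x^s - 1).
\]
With $x=-1/t$ we have $(-t)^s x^s = 1$, so the right-hand side equals $1 - (-t)^s$. This proves the identity for all $t\neq 0$. Both sides are polynomials in $t$ that agree at infinitely many points, so they are equal. (At $t=0$ both sides equal $1$ anyway.)

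An alternative route avoids the substitution. Write $\prod_j (1+t\zeta^j) = \sum_r e_r(1,\zeta,\dots,\zeta^{s-1})\,t^r$ and use that the elementary symmetric functions of the roots of $x^s-1$ vanish except $e_0=1$ and $e_s = (-1)^{s-1}$. Then $e_s t^s = -(-t)^s$. I would mention this only as a cross-check.

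The only real obstacle is sign bookkeeping: pulling out $(-t)$ from each of the $s$ factors, and confirming that $e_s = (-1)^{s+1}$ matches. The substitution argument handles the signs cleanly, so I expect no genuine difficulty.
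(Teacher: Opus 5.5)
Your proposal is correct and follows the paper's proof essentially verbatim: factor $x^s-1=\prod_{j=0}^{s-1}(x-\zeta^j)$, substitute $x=-1/t$, and clear the factor $(-t)^s$. Your additional remark that the identity for $t\neq 0$ extends to a polynomial identity in $t$ is a small point the paper leaves implicit.
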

\begin{proof}
    We start with the factorization of $x^s - 1 = \prod_{j=0}^{s-1} (x - \zeta^j)$.
    Substituting $x = -1/t$ gives
    \[
        (-1/t)^s - 1 = \prod_{j=0}^{s-1} (-1/t - \zeta^j) = (-1/t)^s \prod_{j=0}^{s-1} (1 + t\zeta^j).
    \]
    Multiplying by $(-t)^s$ yields the desired identity.
\end{proof}

\begin{proposition} \label{prop:product_eval}
    Let $\lambda \vdash n$ be a partition. Let $d \mid n$ and let $\xi$ be a primitive $n$-th root of unity. Define $s = n/d$. If $\BST(\lambda, s) \neq \emptyset$, then
    \[
    \prod_{\square \in \lambda} (1 + t (\xi^d)^{c(\square)}) = (1 - (-t)^s)^d.
    \]
\end{proposition}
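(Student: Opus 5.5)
The plan is to combine \Cref{lem:content_dist} with \Cref{prop:cyclotomic_identity}. Set $\zeta \coloneqq \xi^d$. Since $\xi$ is a primitive $n$-th root of unity and $d \mid n$, $\zeta$ is a primitive $s$-th root of unity with $s = n/d$. Consequently $\zeta^{c(\square)}$ depends only on the residue $c(\square) \bmod s$. Negative contents cause no trouble here, because $\zeta^{-1} = \zeta^{s-1}$.

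First I would regroup the product according to these residues:
\[
\prod_{\square \in \lambda} \bigl(1 + t \zeta^{c(\square)}\bigr) = \prod_{j=0}^{s-1} \bigl(1 + t\zeta^{j}\bigr)^{N_j},
\]
where $N_j$ is the number of cells $\square \in \lambda$ with $c(\square) \equiv j \pmod{s}$. We assume $\BST(\lambda, s) \neq \emptyset$ and $s \mid n$, so \Cref{lem:content_dist} applies and gives $N_j = n/s = d$ for every $j$. The product therefore equals $\bigl(\prod_{j=0}^{s-1}(1+t\zeta^j)\bigr)^d$. Applying \Cref{prop:cyclotomic_identity} to the inner product turns it into $1-(-t)^s$, and the claim $(1-(-t)^s)^d$ follows.

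The argument is essentially bookkeeping. The only substantive input is the uniform distribution of contents modulo $s$, which is \Cref{lem:content_dist}. If one wanted to check that lemma independently, the key fact is that a border strip of size $s$ is a connected ribbon whose consecutive cells differ in content by exactly $1$. Its contents are therefore $s$ consecutive integers, which hit each residue class modulo $s$ once.

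I also need to check the degenerate cases. When $s=1$, the relevant root is $\zeta = 1$, the product is $(1+t)^n$, and this agrees with $(1-(-t))^n$. When $s=n$ (that is, $d=1$), the shape $\lambda$ must be a hook, which is consistent with the lemma. I expect no real obstacle in this proposition.
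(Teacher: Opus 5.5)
Your proposal is correct and matches the paper's proof: set $\zeta=\xi^d$, use \Cref{lem:content_dist} to group the factors by content residue modulo $s$ into $d$ copies of $\prod_{j=0}^{s-1}(1+t\zeta^j)$, and finish with \Cref{prop:cyclotomic_identity}. Your side remarks are also accurate but not needed: the justification of the lemma via ribbons having $s$ consecutive contents, and the checks of the degenerate cases $s=1$ and $s=n$.
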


\begin{proof}
    Let $\zeta = \xi^d$. Note that $\zeta$ is a primitive $s$-th root of unity. 
    
    As $\BST(\lambda, s) \neq \emptyset$, the exponents $c(\square)$ modulo $s$ are equidistributed, by \Cref{lem:content_dist}. Thus, the product factors into groups based on the residues modulo $s$:
    \[
    \prod_{\square \in \lambda} (1 + t \zeta^{c(\square)}) = \left( \prod_{j=0}^{s-1} (1 + t \zeta^j) \right)^{d}.
    \]
    By \eqref{eq:cyclotomic_identity}, this simplifies to $\left( 1 - (-t)^{s} \right)^d$.
    
\end{proof}

We can now prove the unified theorem:
\begin{proof}[Proof of \Cref{thm:unified}]
    We must show that for any $d \ge 0$, the evaluation of the polynomial at $q = \xi^d$ equals the number of fixed points of $g^d \times (\cyc^m)^d$ on $\SYT_{\pm k}(\lambda)^m$. Since both the number of fixed points of a cyclic group element $g^d$ and the evaluation of the polynomial at $\xi^d$ only depend on $\gcd(n,d)$, it is sufficient to prove the identity for all $d$ that divide $n$.
    
    Let $d$ be a divisor of $n$, and let $s=n/d$. The root of unity is $\zeta = \xi^d$, a primitive $s$-th root of unity.
    
    Let $\operatorname{Fix}(d)$ denote the number of fixed points of the action of $g^d \times (\cyc^m)^d$ on $\SYT_{\pm k}(\lambda)^m$. This is the product of the fixed point counts on the components.
    By assumption, the number of fixed points of $g^d$ on $\SYT(\lambda)^m$ is given by
    \[
        \left|\left(\SYT(\lambda)^m\right)^{g^d}\right| = \zeta^{\alpha} (f^\lambda(\zeta))^m.
    \]
    For the action of $(\cyc^m)^d$ on $(\binom{[n]}{k})^m$, the number of fixed points is $\left(\qbinom[q=\zeta]{n}{k}\right)^m$ by \Cref{thm:subsets,lem:product_csp}. Using the evaluation of the $q$-binomial coefficients from \eqref{eq:q_binom_eval} we have
    \[
        \operatorname{Fix}(d) = \begin{cases}
        \zeta^{\alpha} \left(f^\lambda(\zeta) \binom{d}{k/s} \right)^m &\text{if $s\mid k$}\\
        0 &\text{otherwise.}
        \end{cases}
    \]
    
    Now we evaluate the polynomial $P(q) = q^{\alpha}(q^{\gamma(n,k)} f^\lambda_{\pm k}(q))^m$ at $q = \zeta$.
    Using the factorization \eqref{eq:factorisation}, we have
    \[
        P(\zeta) = \zeta^{\alpha} \left( \zeta^{\gamma(n,k)} f^\lambda(\zeta) [t^k] \prod_{\square \in \lambda} (1 + t \zeta^{c(\square)}) \right)^m.
    \]
    If $\BST(\lambda, s) = \emptyset$, then by \eqref{eq:MN Rule}, $f^\lambda(\zeta) = 0$ and thus, $\operatorname{Fix}(d)=P(\zeta)=0$.

    If $\BST(\lambda, s) \neq \emptyset$, then by \Cref{prop:product_eval},
    \[
        [t^k]\prod_{\square \in \lambda} (1 + t \zeta^{c(\square)}) = [t^k] (1 - (-t)^s)^{d} = \begin{cases}
        \binom{d}{k/s} (-1)^{k/s+k}&\text{if $s\mid k$}\\
        0 &\text{otherwise.}
        \end{cases}
    \]
    Then, in the case where $k$ is not divisible by $s$ we directly get $P(\zeta) = \operatorname{Fix}(d) = 0$.
    In the other case, let $j=k/s$.
    It remains to check that the twist factor $\zeta^{\gamma(n,k)}$ accounts for the sign difference. Indeed, using $\xi^n=1$ and $\zeta = \xi^d$, we have $\zeta^{\gamma(n,k)} = \xi^{d(n-1)\binom{k}{2}} = \zeta^{-\binom{k}{2}}$.
    We check that $\zeta^{-\binom{k}{2}} = (-1)^{j+k}$.
    If $s$ is odd, then $s \mid \binom{k}{2}$ and $j+k$ is even, so both sides are $1$.
    If $s$ is even, then $k$ is even and $\zeta^{s/2}=-1$.
    So
    \[
    \zeta^{-\binom{k}{2}} = (-1)^{-j(k-1)} = (-1)^{j} = (-1)^{j+k}.
    \]

    Therefore, $P(\zeta) = \operatorname{Fix}(d)$.
\end{proof}

\section*{Acknowledgements}
The author was partially supported by Oliver Pechenik's Discovery Grant (RGPIN-2021-02391) and Launch Supplement (DGECR-2021-00010) from
the Natural Sciences and Engineering Research Council of Canada and was also partially supported by Olya Mandelshtam's Discovery Grant (RGPIN-2021-02568).

We are grateful for helpful conversations with Josh Swanson and Oliver Pechenik.

\bibliographystyle{alpha}
\bibliography{references}

@article{RSW2004,
  title={The cyclic sieving phenomenon},
  author={Reiner, V. and Stanton, D. and White, D.},
  journal={Journal of Combinatorial Theory, Series A},
  volume={108},
  number={1},
  pages={17--50},
  year={2004},
  publisher={Elsevier}
}

@article{Rhoades2010,
  title={Cyclic sieving, promotion, and representation theory},
  author={Rhoades, B.},
  journal={Journal of Combinatorial Theory, Series A},
  volume={117},
  number={1},
  pages={38--76},
  year={2010},
  publisher={Elsevier}
}

@ARTICLE{ArmonSwanson2025,
  title     = "Super major index and {T}hrall's problem",
  author    = "Armon, S. and Swanson, J.",
  journal   = "Algebraic Combinatorics",
  publisher = "Cellule MathDoc/Centre Mersenne",
  volume    =  8,
  number    =  3,
  pages     = "795--815",
  month     =  jun,
  year      =  2025,
  language  = "en"
}

@article{APRU2021,
  title={Skew characters and cyclic sieving},
  volume={9},
  DOI={10.1017/fms.2021.11},
  journal={Forum of Mathematics, Sigma},
  publisher={Cambridge University Press},
  author={Alexandersson, P. and Pfannerer, S. and Rubey, M. and Uhlin, J.},
  year={2021},
  pages={e41}
}

@book{James1984,
  doi = {10.1017/cbo9781107340732},
  year = {1984},
  month = dec,
  publisher = {Cambridge University Press},
  author = {James, G. and Kerber, A.},
  title = {The Representation Theory of the Symmetric Group}
}

@article{FrameRobinsonThrall1954,
  title={The hook graphs of the symmetric group},
  author={Frame, S. and Robinson, G. and Thrall, R.},
  journal={Canadian Journal of Mathematics},
  DOI={10.4153/CJM-1954-030-1},
  volume={6},
  pages={316--324},
  year={1954},
  publisher={Cambridge University Press}
}

@incollection{Sagan2011,
  title={The cyclic sieving phenomenon: a survey},
  author={Sagan, B.},
  booktitle={Surveys in Combinatorics 2011},
  pages={183--234},
  year={2011},
  publisher={Cambridge University Press}
}

@article{Stembridge1994,
  title={Some hidden relations involving the ten symmetry classes of plane partitions},
  author={Stembridge, J.},
  journal={Journal of Combinatorial Theory, Series A},
  volume={68},
  number={2},
  pages={372--409},
  year={1994},
  publisher={Elsevier}
}

@book{Stanley1999,
  place={Cambridge},
  series={Cambridge Studies in Advanced Mathematics},
  title={Enumerative Combinatorics},
  volume={2},
  DOI={10.1017/CBO9780511609589},
  publisher={Cambridge University Press},
  author={Stanley, R.},
  year={1999},
  collection={Cambridge Studies in Advanced Mathematics}
}

\end{document}